\newtheorem{theorem}{Theorem}[section]
\newtheorem{lemma}[theorem]{Lemma}
\newtheorem{proposition}[theorem]{Proposition}
\newtheorem{corollary}[theorem]{Corollary}
\theoremstyle{definition}
\newtheorem{example}[theorem]{Example}
\theoremstyle{remark}
\numberwithin{equation}{section}
\newcommand{\pd}[1]{\mathfrak{#1}}
\newcommand{\ppdots}[1]{{{\mathfrak{p}}_1}, \dots ,{{\mathfrak{p}}_{#1}}}
\newcommand{\ass}[1]{\mathrm{Ass}(#1)}
\newcommand{\supp}[1]{\mathrm{Supp}(#1)}
\newcommand{\ann}[1]{\mathrm{Ann}(#1)}
\newcommand{\ppp}[1]{{{\mathfrak{p}}_1}^{r_1} \cdots {{\mathfrak{p}}_{#1}}^{r_{#1}}}
\newcommand{\ppr}[2]{{{\mathfrak{p}}_{#1}}^{r_{#1}} \cdots {{\mathfrak{p}}_{#2}}^{r_{#2}}}
\newcommand{\pp}[1]{{{\mathfrak{p}}_1} \cdots {{\mathfrak{p}}_{#1}}}
\begin{document}

\setcounter{page}{1}

\title[Ideals as prime factorization of submodules]{Ideals as generalized prime ideal factorization of submodules}
\author[Thulasi, Duraivel, and Mangayarcarassy]{K. R. Thulasi$^{*}$, T. Duraivel, and S. Mangayarcarassy}
\thanks{{The first author was supported by INSPIRE Fellowship (IF170488) of the Department of Science and Technology (DST), Government of India.\\}{\scriptsize
\hskip -0.4 true cm MSC(2010): Primary: 13A05; Secondary: 13A15, 13E05, 13E15
\newline Keywords: prime submodules, prime filtration, Noetherian ring, prime ideal factorization, regular prime extension filtration.\\
$*$Corresponding author }}
\begin{abstract}
For a submodule $N$ of an $R$-module $M$, a unique product of prime ideals in $R$ is assigned, which is called the generalized prime ideal factorization of $N$ in $M$, and denoted as ${\mathcal{P}}_M(N)$. But for a product of prime ideals ${{{\mathfrak{p}}_1} \cdots {{\mathfrak{p}}_{n}}}$ in $R$ and an $R$-module $M$, there may not exist a submodule $N$ in $M$ with ${\mathcal{P}}_{M}(N) = {{{\mathfrak{p}}_1} \cdots {{\mathfrak{p}}_{n}}}$. In this article, for an arbitrary product of prime ideals ${{{\mathfrak{p}}_1} \cdots {{\mathfrak{p}}_{n}}}$ and a module $M$, we find conditions for the existence of submodules in $M$ having ${{{\mathfrak{p}}_1} \cdots {{\mathfrak{p}}_{n}}}$ as their generalized prime ideal factorization.
\end{abstract}

\maketitle

\section{Introduction}

Throughout this article, $R$ denotes a commutative Noetherian ring with identity and $M$ will be a finitely generated unitary $R$-module. The reference for standard terminology and notations will be \cite{C} and \cite{D}.

A proper submodule $N$ of an $R$-module $M$ is called a prime submodule of $M$ if for any $a \in R$ and $x \in M$, $ax \in N$ implies $a \in (N:M)$ or $x \in N$. If $N$ is a prime submodule of $M$, then $(N : M) = \pd p$, a prime ideal in $R$, and we say $N$ is a $\pd p$-prime submodule of $M$. Let $N$ and $K$ be submodules of $M$. Then $K$ is called a $\pd p$-prime extension of $N$ in $M$ if $N$ is a $\pd p$-prime submodule of $K$, and it is denoted as $N \overset{\pd p} \subset K$. In this case, $\ass{K/N} = \{ \pd p \}$.

Let $N$ be a proper submodule of an $R$-module $M$. Then we have $\pd p \in \ass{M/N}$ if and only if there exists a $\pd p$-prime extension of $N$ in $M$ \cite[Lemma~3]{A}. A $\pd p$-prime extension $K$ of $N$ is said to be maximal if $K$ is maximal among the submodules of $M$ which are $\pd p$-prime extensions of $N$ in $M$. Since $M$ is Noetherian, maximal $\pd p$-prime extensions exist. A filtration of submodules $\mathcal{F} : N = M_0 \overset{{\pd p}_1}\subset M_1 \subset \cdots \subset M_{n-1} \overset{{\pd p}_n}\subset M_n = M$ is called a maximal prime extension (MPE) filtration of $M$ over $N$, if $M_{i-1}\overset{{\pd p}_i} \subset M_i$ is a maximal ${\pd p}_i$-prime extension in $M$ for $ 1 \leq i \leq n$. It is proved that $\ass{M/M_{i-1}} = \{{\pd p}_i,\ldots,{\pd p}_n\}$ for each $1 \leq i \leq n$ \cite[Proposition~14]{A}. Hence, the set of prime ideals which occur in any MPE filtration of $M$ over $N$ is exactly equal to $\ass{M/N}$.

A maximal $\pd p$-prime extension $K$ of $N$ is said to be regular if $\pd p$ is a maximal element in $\ass{M/N}$, and the filtration $\mathcal{F} : N = M_0 \overset{{\pd p}_1}\subset M_1 \subset \cdots \subset M_{n-1} \overset{{\pd p}_n}\subset M_n = M$ is called a regular prime extension (RPE) filtration of $M$ over $N$ if $M_{i-1}\overset{{\pd p}_i} \subset M_i$ is a regular ${\pd p}_i$-prime extension in $M$ for $ 1 \leq i \leq n$. In this case, for each $i<j$, $M_i \overset{{\pd p}_{i+1}}\subset M_{i+1} \cdots \subset M_{j-1} \overset{{\pd p}_j}\subset M_j$ is also an RPE filtration of $M_j$ over $M_i$. Since RPE filtrations are MPE filtrations, $\ass{M_j/M_i} = \{{\pd p}_{i+1},\ldots,{\pd p}_j\}$ for $1 \leq i < j \leq n$. In particular, $\ass{M/N} = \{{\pd p}_1,\ldots,{\pd p}_n\}$.

The following lemma gives the condition for interchanging the occurrences of prime ideals in an RPE filtration.

\begin{lemma}\cite[Lemma~20]{A}\label{interchange}
Let $N$ be a proper submodule of $M$ and $N = M_0 \subset \cdots \subset M_{i-1}\overset{{\pd p}_i}\subset M_i \overset{{\pd p}_{i+1}}\subset M_{i+1}\subset \cdots \subset M_n = M $ be an RPE filtration of $M$ over $N$. If ${\pd p}_{i+1} \not\subseteq {\pd p}_{i}$ for some $i$, then there exists a submodule $K_i$ of $M$ such that $N = M_0 \subset \cdots \subset M_{i-1}\overset{{\pd p}_{i+1}}\subset K_i \overset{{\pd p}_{i}}\subset M_{i+1}\subset \cdots \subset M_n$ $=$ $M$ is an RPE filtration of $M$ over $N$.
\end{lemma}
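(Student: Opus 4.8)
The plan is to reduce to the three-term segment $A \overset{\pd p}\subset B \overset{\pd q}\subset C$, where I abbreviate $A=M_{i-1}$, $B=M_i$, $C=M_{i+1}$, $\pd p={\pd p}_i$, $\pd q={\pd p}_{i+1}$, and then build the interchanged module inside $C$ by primary decomposition. Since the parts of the given filtration below $A$ and above $C$ are kept unchanged, it suffices to produce $K$ with $A\subseteq K\subseteq C$ so that $A\overset{\pd q}\subset K$ and $K\overset{\pd p}\subset C$ are both \emph{regular} prime extensions in $M$. (Recall $N\overset{\pd r}\subset L$ means exactly that $L/N$ is torsion-free over $R/\pd r$ with annihilator $\pd r$; I call such a quotient a $\pd r$-prime module.) First I would record the associated-prime data: regularity of the two given steps gives $\pd p$ maximal in $\ass{M/A}$ and $\pd q$ maximal in $\ass{M/B}$; since $\ass{M/A}=\{\pd p\}\cup\ass{M/B}$, the hypothesis $\pd q\not\subseteq\pd p$ then forces $\pd q$ maximal in $\ass{M/A}$ too, forces $\pd p\neq\pd q$, and makes $\pd p,\pd q$ incomparable; finally $\ass{C/A}=\{\pd p,\pd q\}$ by the formula quoted above.

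The decisive auxiliary fact is a uniqueness statement, which I would prove first: if a prime $\pd r$ is maximal in $\ass{M/N}$, then $N$ has a \emph{unique} maximal $\pd r$-prime extension in $M$, and it contains every $\pd r$-prime extension of $N$ in $M$. Indeed, for two maximal ones $K_1,K_2$ the module $(K_1+K_2)/N\subseteq M/N$ contains $K_1/N$, while $(K_1+K_2)/K_1$ is a quotient of $K_2/N$ and so is supported in $V(\pd r)$; since $\pd r$ is maximal in $\ass{M/N}$ this forces $\ass{(K_1+K_2)/N}=\{\pd r\}$, and $\ann{(K_1+K_2)/N}=\ann{K_1/N}\cap\ann{K_2/N}=\pd r$, so $(K_1+K_2)/N$ is a $\pd r$-prime module, i.e. $N\overset{\pd r}\subset K_1+K_2$; maximality gives $K_1=K_1+K_2=K_2$, and the same computation puts any $\pd r$-prime extension inside the maximal one.

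For the construction: as $\ass{C/A}=\{\pd p,\pd q\}$ consists of two incomparable (hence minimal) primes, $0$ has a unique irredundant primary decomposition $0=Q_p\cap Q_q$ in $C/A$ with $Q_p$ the $\pd p$-primary and $Q_q$ the $\pd q$-primary component, both nonzero; let $K$ be the preimage of $Q_p$ in $C$. Using that $C/B$ is $\pd q$-coprimary and $\pd p\not\subseteq\pd q$, one identifies $Q_q=B/A$, so $Q_p=K/A$ embeds into $(C/A)/Q_q=C/B$; a nonzero submodule of the $\pd q$-prime module $C/B$ is again a $\pd q$-prime module, so $A\overset{\pd q}\subset K$. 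For the other step, $(C/A)/Q_p=C/K$ is $\pd p$-coprimary and hence embeds into its localization at $\pd p$; since $(C/A)/(Q_p+Q_q)$ is a quotient of both $(C/A)/Q_p$ and $(C/A)/Q_q$ it is supported on $V(\pd p+\pd q)$, a set omitting $\pd p$ because $\pd q\not\subseteq\pd p$, so localizing at $\pd p$ identifies $(C/K)_{\pd p}$ with $(B/A)_{\pd p}$, a $\kappa(\pd p)$-vector space; thus $C/K$ is annihilated by $\pd p$ and torsion-free over $R/\pd p$, i.e. $K\overset{\pd p}\subset C$.

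It remains to upgrade the two extensions to \emph{regular} ones in $M$. Maximality of $\pd q$ in $\ass{M/A}$ is in hand; for $\pd p$ in $\ass{M/K}$ I would extend $A\overset{\pd q}\subset K$ to an RPE filtration of $M$ over $A$, observe its tail is an RPE filtration of $M$ over $K$, and use that two consecutive $\pd q$-prime extensions compose (by torsion-freeness over $R/\pd q$) to conclude $\pd q\notin\ass{M/K}$, whence $\ass{M/K}=\ass{M/A}\setminus\{\pd q\}$. That $A\overset{\pd q}\subset K$ is a maximal $\pd q$-prime extension in $M$: $K$ is at once the largest $\pd q$-prime extension of $A$ inside $C$, and the unique maximal one $K^{*}$ in $M$ satisfies $K^{*}\cap B=A$, so $(K^{*}+B)/B\cong K^{*}/A$ is a $\pd q$-prime module; uniqueness applied to $B$ forces $K^{*}+B\subseteq C$, hence $K^{*}\subseteq K$ and $K^{*}=K$. \textbf{The hardest point} is the maximality of $K\overset{\pd p}\subset C$ in $M$. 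I would argue by contradiction: if $K\overset{\pd p}\subset C'$ with $C\subsetneq C'$, then $\ass{C'/A}=\{\pd p,\pd q\}$ and, exactly as in the construction, the $\pd p$-primary component of $0$ in $C'/A$ is $K/A$; so the $\pd q$-primary component $\widetilde{Q}$ embeds into $C'/K$ and is a $\pd p$-prime module, and its preimage $W$ in $C'$ gives $A\overset{\pd p}\subset W$, hence $W\subseteq B$ by uniqueness. If $W=B$, then $C'/B=(C'/A)/(B/A)$ is $\pd q$-coprimary and, on localizing at $\pd q$ (noting $C'/C$ is a quotient of a $\pd p$-prime module, hence supported off $\pd q$), is a $\pd q$-prime module, so $B\overset{\pd q}\subset C'$ with $C'\supsetneq C$, contradicting maximality of $C$ over $B$. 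If $W\subsetneq B$, then the nonzero quotient $B/W$ of the $\pd p$-prime module $B/A$ embeds into the $\pd q$-coprimary $C'/W$, forcing $\ass{B/W}\subseteq\{\pd q\}\cap V(\pd p)=\emptyset$, again impossible. So no such $C'$ exists, and $M_0\subset\cdots\subset M_{i-1}\overset{\pd q}\subset K\overset{\pd p}\subset M_{i+1}\subset\cdots\subset M_n$ is the desired RPE filtration with $K_i=K$.
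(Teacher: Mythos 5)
Your overall strategy is sound, and the genuinely hard parts are done correctly: the reduction to the segment $A=M_{i-1}\subset B=M_i\subset C=M_{i+1}$, the uniqueness of the maximal $\pd r$-prime extension when $\pd r$ is maximal in $\ass{M/N}$, the construction of $K$ as the preimage of the $\pd p$-primary component of $0$ in $C/A$ together with the identification $Q_q=B/A$, and both maximality arguments (the $K^*\cap B=A$ argument and the two-case analysis $W=B$, $W\subsetneq B$). Since the paper only cites \cite[Lemma~20]{A} for this statement, there is no in-paper proof to compare against. However, one step is wrong as written: the claim that ``two consecutive $\pd q$-prime extensions compose'' and the resulting conclusions $\pd q\notin\ass{M/K}$ and $\ass{M/K}=\ass{M/A}\setminus\{\pd q\}$. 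An extension of a $\pd q$-prime module by a $\pd q$-prime module need not be $\pd q$-prime (in $R/(y^2)$ over $k[y]$, both factors of $0\subset (y)/(y^2)\subset R/(y^2)$ are $(y)$-prime but $R/(y^2)$ is not, since it is not killed by $(y)$), and $\pd q$ can perfectly well remain associated to $M/K$, because the original filtration may contain $\pd q$ again right after $C$. Concretely, take $R=k[x,y]$, $M=R/(x)\oplus R/(y^2)$, $\pd p=(x)$, $\pd q=(y)$, with the RPE filtration $0\overset{(x)}\subset R/(x)\oplus 0\overset{(y)}\subset R/(x)\oplus (y)/(y^2)\overset{(y)}\subset M$; the hypothesis holds at $i=1$, your construction gives $K=0\oplus (y)/(y^2)$, and $M/K\cong R/(x)\oplus R/(y)$, so $(y)\in\ass{M/K}$. (There is also a latent circularity: extending $A\overset{\pd q}\subset K$ to an RPE filtration of $M$ over $A$ already presupposes that $K$ is a maximal $\pd q$-prime extension in $M$, which you only prove afterwards.)

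The good news is that what you actually need, namely that $\pd p$ is maximal in $\ass{M/K}$, is true and follows from material you already have, so the defect is local. Your proofs that $A\overset{\pd q}\subset K$ and $K\overset{\pd p}\subset C$ are maximal prime extensions in $M$ do not use the flawed step; once they are in place, every step of the new chain $N=M_0\subset\cdots\subset M_{i-1}\overset{\pd q}\subset K\overset{\pd p}\subset M_{i+1}\subset\cdots\subset M_n=M$ is a maximal prime extension in $M$, so it is an MPE filtration, and the property quoted in the introduction (\cite[Proposition~14]{A}) gives $\ass{M/K}=\{{\pd p}_i,{\pd p}_{i+2},\ldots,{\pd p}_n\}\subseteq\ass{M/M_{i-1}}$. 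Since ${\pd p}_i$ is maximal in $\ass{M/M_{i-1}}$ by regularity of the original step $M_{i-1}\overset{{\pd p}_i}\subset M_i$, it is maximal in $\ass{M/K}$; regularity of the unchanged steps is untouched, and $\pd q$ maximal in $\ass{M/M_{i-1}}$ you already established. Replace the ``$\pd q\notin\ass{M/K}$'' paragraph by this argument (this is exactly the device used in the paper's proof of Theorem~\ref{mainresult2}), and your proof is complete.
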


RPE filtrations satisfy the following uniqueness property.
\begin{lemma}\cite[Theorem~22]{A}
For a proper submodule $N$ of $M$, the number of times a prime ideal $\pd p$ occurs in any RPE filtration of $M$ over $N$ is unique, and hence, any two RPE filtrations of $M$ over $N$ have the same length.
\end{lemma}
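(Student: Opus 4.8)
The plan is to prove the following sharper statement by induction on the length $n$ of a \emph{given} RPE filtration of $M$ over $N$ (allowing $N = M$, where the only RPE filtration is the trivial one and there is nothing to prove): every RPE filtration of $M$ over $N$ has the same multiset of prime ideals, counted with multiplicity, as the given one. Granting this, both assertions of the statement follow at once. Throughout I use the already-quoted fact that the set of prime ideals occurring in any RPE filtration of $M$ over $N$ is exactly $\ass{M/N}$; what is at stake is the multiplicities.

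The first ingredient is a uniqueness statement: \emph{if $\pd p$ is a maximal element of $\ass{M/N}$, then $N$ has a unique maximal $\pd p$-prime extension $K_{\pd p}$ in $M$.} To prove it I would first note that, for $N \subseteq L \subseteq M$, the relation $N \overset{\pd p}\subset L$ holds precisely when $L/N$ is a nonzero torsion-free $R/\pd p$-module. Given two $\pd p$-prime extensions $L_1, L_2$ of $N$ in $M$, I claim $L_1 + L_2$ is again one. Indeed $\pd p$ annihilates $(L_1+L_2)/N$, so the $R/\pd p$-torsion submodule $T$ of $(L_1+L_2)/N$ is killed by $\pd p$; since $T \subseteq M/N$ we get $\mathrm{Ass}_R(T) \subseteq \ass{M/N} \cap V(\pd p) = \{\pd p\}$ by maximality of $\pd p$, hence (viewing $T$ over the domain $R/\pd p$) $\mathrm{Ass}_{R/\pd p}(T) \subseteq \{(0)\}$, so $T$ is itself torsion-free over $R/\pd p$; being also torsion by construction, $T = 0$. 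Thus $(L_1+L_2)/N$ is nonzero torsion-free over $R/\pd p$. Since $M$ is Noetherian, the sum of \emph{all} $\pd p$-prime extensions of $N$ in $M$ is a finite such sum, hence a $\pd p$-prime extension, and it is visibly the unique maximal one. In particular, in any RPE filtration $N = M_0 \overset{{\pd p}_1}\subset M_1 \subset \cdots \overset{{\pd p}_n}\subset M$, the term $M_1$ is forced to be $K_{{\pd p}_1}$, because a regular ${\pd p}_1$-prime extension is in particular a maximal ${\pd p}_1$-prime extension and ${\pd p}_1$ is maximal in $\ass{M/N}$.

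For the inductive step, let $\mathcal F : N = M_0 \overset{{\pd p}_1}\subset \cdots \overset{{\pd p}_n}\subset M$ be the given RPE filtration and let $\mathcal F'$ be any other RPE filtration of $M$ over $N$; set $\pd p = {\pd p}_1$, a maximal element of $\ass{M/N}$. The prime $\pd p$ occurs in $\mathcal F'$; letting $\ell$ be the position of its leftmost occurrence, every prime at a position $< \ell$ lies in $\ass{M/N}$ and is distinct from $\pd p$, hence (by maximality of $\pd p$) does not contain $\pd p$. So Lemma~\ref{interchange} applies and moves this occurrence of $\pd p$ one step to the left while preserving RPE-ness and the multiset of primes; iterating, I obtain an RPE filtration $\widetilde{\mathcal F'}$ of $M$ over $N$ with the same multiset of primes as $\mathcal F'$ and with first prime $\pd p$. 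By the uniqueness above, both $\mathcal F$ and $\widetilde{\mathcal F'}$ have first term $K_{\pd p}$, so their tails are RPE filtrations of $M$ over $K_{\pd p}$, and the tail of $\mathcal F$ has length $n - 1$. Applying the induction hypothesis to $M$ over $K_{\pd p}$, these two tails have the same multiset of primes; reinstating the initial $\pd p$ shows $\mathcal F$ and $\widetilde{\mathcal F'}$, hence $\mathcal F$ and $\mathcal F'$, have the same multiset of primes. This completes the induction.

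The main obstacle is the uniqueness of the maximal $\pd p$-prime extension for $\pd p$ maximal in $\ass{M/N}$; the decisive point is that maximality of $\pd p$ in $\ass{M/N}$ forbids the torsion submodule $T$ above from having any associated prime strictly containing $\pd p$, which is exactly what forces $T = 0$ and makes sums of $\pd p$-prime extensions behave well. Once that is in hand, the remainder is bookkeeping with Lemma~\ref{interchange} and the identification of the set of primes in an RPE filtration with $\ass{M/N}$.
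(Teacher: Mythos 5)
Your argument is correct, and it is essentially self-contained: the statement is only cited in this paper (from reference [A, Theorem~22]), with no proof reproduced, so there is nothing to compare step by step; what you have written is a legitimate independent proof using only the facts the paper quotes (Lemma~\ref{interchange}, the identification of the primes occurring in an RPE filtration with $\ass{M/N}$, and Noetherianness). The key new ingredient you supply --- that for $\pd p$ maximal in $\ass{M/N}$ the maximal $\pd p$-prime extension of $N$ in $M$ is unique, proved via the torsion-submodule argument on $(L_1+L_2)/N$ --- is sound: maximality of $\pd p$ forces $\mathrm{Ass}_R(T)\subseteq \ass{M/N}\cap V(\pd p)=\{\pd p\}$, which is incompatible with $T$ being nonzero torsion over $R/\pd p$, and ACC then makes the sum of all $\pd p$-prime extensions the unique maximal one. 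The reduction step (shuffling the leftmost occurrence of $\pd p$ to the front of the second filtration via Lemma~\ref{interchange}, which is applicable because no other prime of $\ass{M/N}$ can contain the maximal element $\pd p$, then cancelling the common first term and inducting on the length of the given filtration) is also correct, including the degenerate case $K_{\pd p}=M$. One remark: the uniqueness of the first term could alternatively be read off from Lemma~\ref{lemma1}, which identifies it as $\{x\in M\mid \pd p\, x\subseteq N\}$, but since that lemma is imported from another reference whose proof may itself rest on the uniqueness theorem, your direct argument has the advantage of avoiding any risk of circularity.
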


The submodules which occur in an RPE filtration are characterized as follows.
\begin{lemma}\cite[Lemma~3.1]{B}.
Let $N$ be a proper submodule of an $R$-module $M$. If $N = M_0 \overset{{\pd p}_1}\subset M_1 \subset \cdots \subset M_{n-1} \overset{{\pd p}_n}\subset M_n = M$ is an RPE filtration of $M$ over $N$, then $M_i = \{ x \in M \mid \pp i x \subseteq N \}$ for $ 1 \leq i \leq n$. \label{lemma1}
\end{lemma}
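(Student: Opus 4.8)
The plan is to prove the equality by induction on $i$, establishing the two inclusions separately. Write $L_i := \{x \in M \mid \pp i x \subseteq N\}$, with the convention $L_0 := N$; one checks immediately that each $L_i$ is a submodule of $M$. The base case $i=0$ is the trivial identity $M_0 = N = L_0$. For the inductive step I assume $M_{i-1} = L_{i-1}$ and prove $M_i = L_i$.

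For the inclusion $M_i \subseteq L_i$: since $M_{i-1} \overset{{\pd p}_i}\subset M_i$ we have $(M_{i-1} : M_i) = {\pd p}_i$, hence ${\pd p}_i M_i \subseteq M_{i-1} = L_{i-1}$. Thus for $x \in M_i$ every element of ${\pd p}_i x$ lies in $L_{i-1}$, so $\pp{i-1}({\pd p}_i x) \subseteq N$, that is $\pp i x \subseteq N$, giving $x \in L_i$. Running the same computation backwards shows that for $x \in L_i$ one has ${\pd p}_i x \subseteq L_{i-1} = M_{i-1}$; the resulting identity ${\pd p}_i L_i \subseteq M_{i-1}$ is the key auxiliary fact for the reverse inclusion.

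The serious direction is $L_i \subseteq M_i$, and the idea is to show that $M_{i-1}$ is actually a ${\pd p}_i$-prime submodule of $L_i$, i.e. $M_{i-1} \overset{{\pd p}_i}\subset L_i$; since $M_i$ already sits inside $L_i$ by the previous paragraph and is, by hypothesis, a maximal ${\pd p}_i$-prime extension of $M_{i-1}$ in $M$, maximality then forces $L_i = M_i$. To verify primeness: because ${\pd p}_i L_i \subseteq M_{i-1}$, the quotient $L_i/M_{i-1}$ — nonzero, as it contains $M_i/M_{i-1} \neq 0$ — is annihilated by ${\pd p}_i$, so every associated prime of $L_i/M_{i-1}$ contains ${\pd p}_i$; on the other hand $L_i/M_{i-1}$ is a submodule of $M/M_{i-1}$, so $\ass{L_i/M_{i-1}} \subseteq \ass{M/M_{i-1}} = \{{\pd p}_i,\dots,{\pd p}_n\}$, and since the filtration is regular, ${\pd p}_i$ is a maximal element of $\ass{M/M_{i-1}}$, hence the only member of that set containing ${\pd p}_i$. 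Therefore $\ass{L_i/M_{i-1}} = \{{\pd p}_i\}$. This yields two things: first $\ann{L_i/M_{i-1}} \subseteq {\pd p}_i$, which together with ${\pd p}_i L_i \subseteq M_{i-1}$ gives $(M_{i-1} : L_i) = {\pd p}_i$; second, the set of zerodivisors on $L_i/M_{i-1}$ equals ${\pd p}_i$, so if $ay \in M_{i-1}$ with $y \in L_i \setminus M_{i-1}$ then $a$ kills a nonzero element of $L_i/M_{i-1}$ and hence $a \in {\pd p}_i$. These are precisely the conditions making $M_{i-1}$ a ${\pd p}_i$-prime submodule of $L_i$, which completes the argument.

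I expect the main obstacle to be exactly this last point: one cannot simply invoke ``$\ass{L_i/M_{i-1}} = \{{\pd p}_i\}$ implies $M_{i-1}$ is ${\pd p}_i$-prime in $L_i$'', since that implication is false in general (a module can be ${\pd p}$-coprimary without its annihilator being ${\pd p}$). The fix is to supply the missing ingredient $(M_{i-1} : L_i) = {\pd p}_i$ separately, and this is where the auxiliary identity ${\pd p}_i L_i \subseteq M_{i-1}$ — itself a direct consequence of the explicit description $M_{i-1} = L_{i-1}$ coming from the induction hypothesis — does its work. It is also worth noting that regularity of the filtration, not merely maximality of the prime extensions, is genuinely used: it is what rules out an associated prime of $L_i/M_{i-1}$ strictly containing ${\pd p}_i$, and without it $M_{i-1}$ need not be prime in $L_i$.
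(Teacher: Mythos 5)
Your argument is correct: the induction, the identity ${\pd p}_i L_i \subseteq M_{i-1}$ coming from the induction hypothesis, the use of $\ass{M/M_{i-1}} = \{{\pd p}_i,\dots,{\pd p}_n\}$ together with regularity to get $\ass{L_i/M_{i-1}} = \{{\pd p}_i\}$ and hence $(M_{i-1}:L_i)={\pd p}_i$, and the final appeal to maximality of $M_i$ all check out. Note that the paper itself does not prove this lemma but quotes it from \cite{B}, so there is no in-text proof to compare with; your route is the natural one, relying only on the facts about MPE/RPE filtrations recalled in the introduction, and your closing remark correctly identifies where regularity (not just maximality) is needed.
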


Hence, the product of prime ideals that occur in any two RPE filtrations of $M$ over $N$ is the same. This product is called the generalized prime ideal factorization of $N$ in $M$ and denoted as ${\mathcal{P}}_M(N)$ in \cite{E}, and sufficient conditions for ${\mathcal{P}}_{M}(\pp n M) = \pp n$ were found, where ${\pd p}_1 , \dots , {\pd p}_n$ are prime ideals in $R$ \cite[Theorem~2.14]{E}.

There may be products of prime ideals that are not the generalized prime ideal factorization of any submodule of a given module.
\begin{example}\label{exa1}
Let $R = \frac{k[x,y,z]}{(xy - z^2 , x^2 - yz)}$ and $\overline{x}, \overline{y}, \overline{z}$ denote the images of $x, y, z$ respectively in $R$. Let $\pd p$ be the prime ideal $(\overline{x}, \overline{z})$. Then $({\pd p}^2 : {\pd p}) = (\overline{x}, \overline{y}, \overline{z})$. Suppose there exists an ideal $\pd a$ in $R$ with ${\mathcal{P}}_{R}(\pd a) = {\pd p}^2$. Then there exists an RPE filtration $\pd a \overset{\pd p} \subset {\pd a}_1 \overset{\pd p} \subset R$ and therefore, $\ass{R/\pd a} = \{ \pd p \}$. By Lemma \ref{lemma1}, ${\pd a}_1 = (\pd a : \pd p)$, and since ${\pd p}^2 \subseteq \pd a$, we have $({\pd p}^2 : {\pd p}) \subseteq (\pd a : \pd p)$. Since $(\overline{x}, \overline{y}, \overline{z}) = ({\pd p}^2 : {\pd p}) \subseteq (\pd a : \pd p) = {\pd a}_1 \subsetneq R$ and $(\overline{x}, \overline{y}, \overline{z})$ is a maximal ideal, $(\overline{x}, \overline{y}, \overline{z}) = (\pd a : \pd p)$. Then $(\overline{x}, \overline{y}, \overline{z}) = (\pd a : p)$ for every $p \in \pd p \setminus \pd a$. This would imply that $(\overline{x}, \overline{y}, \overline{z}) \in \ass{R/\pd a}$, a contradiction. Therefore, an ideal $\pd a$ in $R$ cannot have ${\mathcal{P}}_{R}(\pd a) = {\pd p}^2$.
\end{example}

In this article, for a product of prime ideals $\pp n$ (${\pd p}_i$'s not necessarily distinct), we find conditions for the existence of submodules $N$ of $M$ with ${\mathcal{P}}_M(N) = \pp n$. We also give a necessary and sufficient condition for ${\mathcal{P}}_{M}(\pp n M) = \pp n$.
\section{Ideals as Generalized Prime Ideal Factorization of Submodules}
\begin{lemma}\label{minimal}
Let $N$ be a submodule of $M$ and $ {\pd p}_1, \dots , {\pd p}_r $ be some minimal prime ideals in $\ass{M/N}$. Then there exists a submodule $K$ of $M$ containing $N$ such that ${\mathcal{P}}_{M}(K) = \pp r$.
\end{lemma}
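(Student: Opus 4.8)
The plan is to build the submodule $K$ one prime at a time, peeling off the minimal primes ${\pd p}_1,\dots,{\pd p}_r$ from $\ass{M/N}$ in succession, and to use Lemma~\ref{lemma1} together with the interchange lemma (Lemma~\ref{interchange}) to recognize the resulting generalized prime ideal factorization. First I would observe that since each ${\pd p}_i$ is a minimal element of $\ass{M/N}$ (hence in particular a maximal element among the minimal primes, or at least minimal in the poset), there is a regular ${\pd p}_i$-prime extension of $N$ in $M$; more precisely, $N$ admits an MPE filtration of $M/N$, and the minimal primes can be taken to appear at the bottom. The key point is that a regular ${\pd p}$-prime extension exists whenever ${\pd p}$ is maximal in $\ass{M/N}$, so I need to be slightly careful: minimality in $\ass{M/N}$ is what is given, not maximality.

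So the real first step is to produce, for the set of minimal primes $\{{\pd p}_1,\dots,{\pd p}_r\}$, a single submodule $K \supseteq N$ whose associated primes are exactly these (with appropriate multiplicities) and which sits at the bottom of an RPE filtration of $M$ over $N$. I would do this by taking an RPE filtration $N = M_0 \subset M_1 \subset \cdots \subset M_n = M$ of $M$ over $N$ and repeatedly applying Lemma~\ref{interchange}: since ${\pd p}_1,\dots,{\pd p}_r$ are minimal in $\ass{M/N} = \{{\pd q}_1,\dots,{\pd q}_n\}$, whenever one of them occurs above a non-minimal prime (or above a minimal prime not containing it) in the filtration, the containment ${\pd p}_j \not\subseteq {\pd q}$ lets me swap them downward. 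After finitely many interchanges I obtain an RPE filtration of $M$ over $N$ in which the first several steps use exactly the primes ${\pd p}_1,\dots,{\pd p}_r$ (in some order, with their multiplicities in $\ass{M/N}$). Let $K$ be the submodule reached after exactly those first steps. Then by construction $N = M_0 \subset \cdots \subset K$ is an RPE filtration of $K$ over $N$, so ${\mathcal{P}}_M(K)$ is the product of those primes, which (after relabelling) is $\pp r$.

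The key steps in order: (1) fix an RPE filtration of $M$ over $N$ and record that its primes, counted with multiplicity, are exactly $\ass{M/N}$ counted with the RPE multiplicities; (2) use the minimality of each ${\pd p}_i$ to check the hypothesis ${\pd p}_{i+1}\not\subseteq {\pd p}_i$ of Lemma~\ref{interchange} holds whenever a ${\pd p}_i$ sits directly above a prime that is not below it, and bubble all occurrences of ${\pd p}_1,\dots,{\pd p}_r$ to the bottom; (3) set $K = M_r'$ where $M_0 \subset M_1' \subset \cdots \subset M_r' \subset \cdots$ is the rearranged filtration and the first $r$ (or rather, the appropriate number of) steps use precisely ${\pd p}_1,\dots,{\pd p}_r$; (4) invoke Lemma~\ref{lemma1} (or simply the definition of ${\mathcal{P}}_M$ via RPE filtrations) to conclude ${\mathcal{P}}_M(K)=\pp r$, noting that the bottom segment of an RPE filtration of $M$ over $N$ is an RPE filtration of $K$ over $N$.

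The main obstacle I anticipate is the bookkeeping in step~(2): I must ensure the interchange lemma can always be applied to move a minimal prime past whatever lies immediately below it in the current filtration. If the prime immediately below is ${\pd p}_i$ itself this is fine (equal primes need not move relative to each other), and if it is some ${\pd q} \in \ass{M/N}$ with ${\pd p}_i \subseteq {\pd q}$ then by minimality ${\pd p}_i = {\pd q}$, so we are again in the harmless case; otherwise ${\pd p}_i \not\subseteq {\pd q}$ and Lemma~\ref{interchange} applies. A careful induction on the number of ``inversions'' — pairs where a non-target prime precedes a target prime ${\pd p}_i$ — shows the process terminates with all target primes at the bottom. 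A secondary subtlety is the multiplicities: the ${\pd p}_i$ in the statement are ``some minimal prime ideals,'' possibly with repetition, and I should match these against the RPE multiplicities, taking $r$ to be consistent with how many times each appears; if $r$ exceeds the available multiplicity the statement would fail, so implicitly the listing $\pp r$ respects the RPE multiplicities of $\ass{M/N}$. Assuming that reading, the argument above is complete.
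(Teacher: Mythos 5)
Your overall strategy (take an RPE filtration of $M$ over $N$ and use Lemma \ref{interchange} to collect the occurrences of ${\pd p}_1,\dots,{\pd p}_r$ into one contiguous segment) is the same as the paper's, but you have the direction reversed, and the reversal breaks the argument in two linked places. First, ${\mathcal{P}}_M(K)$ is by definition the product of the primes occurring in an RPE filtration of $M$ \emph{over} $K$, i.e.\ in the segment from $K$ up to $M$; the bottom segment $N\subset\cdots\subset K$ computes ${\mathcal{P}}_K(N)$, not ${\mathcal{P}}_M(K)$. So even if you could bubble all occurrences of ${\pd p}_1,\dots,{\pd p}_r$ down to the bottom and take $K$ to be the module sitting just above them, what you would have shown is ${\mathcal{P}}_K(N)=\pp r$, while ${\mathcal{P}}_M(K)$ would be the product of the \emph{other} primes of $\ass{M/N}$. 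The paper does the opposite: it pushes ${\pd p}_1,\dots,{\pd p}_r$ to the top of the filtration and takes $K=K_{n-r}$, so that the segment from $K$ to $M$ uses exactly these primes.

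Second, your justification for the bubbling step is also backwards, and bubbling the minimal primes \emph{down} is in general impossible. To move a target prime ${\pd p}_i$ below an adjacent ${\pd q}$, Lemma \ref{interchange} requires ${\pd p}_i\not\subseteq{\pd q}$ (the upper prime not contained in the lower one). You argue that ${\pd p}_i\subseteq{\pd q}$ forces ${\pd p}_i={\pd q}$ ``by minimality,'' but minimality of ${\pd p}_i$ in $\ass{M/N}$ only excludes associated primes strictly \emph{contained in} ${\pd p}_i$; it says nothing about associated primes strictly containing it. If ${\pd p}_i\subsetneq{\pd q}$ with ${\pd q}\in\ass{M/N}$ (a perfectly typical situation for a minimal prime), the swap is unavailable, and indeed regularity forces ${\pd q}$ to occur before ${\pd p}_i$ in every RPE filtration, so ${\pd p}_i$ can never reach the bottom. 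Minimality is exactly what legalizes the correct, upward moves: if some ${\pd q}$ sits immediately above ${\pd p}_i$, then ${\pd q}\subseteq{\pd p}_i$ would force ${\pd q}={\pd p}_i$ by minimality, and otherwise ${\pd q}\not\subseteq{\pd p}_i$, so Lemma \ref{interchange} applies and ${\pd p}_i$ moves toward $M$. Correcting both points turns your outline into the paper's proof.
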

\begin{proof}
Let $N = M_0 \overset{{\pd q}_1}\subset M_1 \subset \cdots \subset M_{n-1} \overset{{\pd q}_n}\subset M_n = M$ be an RPE filtration of $M$ over $N$. Since $\{ {\pd q}_1, \dots , {\pd q}_n \} = \ass {M/N}$, for each $1 \leq i \leq r$, ${\pd p}_i = {\pd q}_j$ for some $j$. Since $\ppdots r$ are minimal, we can reorder ${\pd q}_1, \dots , {\pd q}_n $ such that ${\pd q}_j \not \subset {\pd q}_k$ for $1 \leq j<k \leq n$ and ${\pd q}_{n-r+i} = {\pd p}_i$ for $1 \leq i \leq r$. So using Lemma \ref{interchange} sufficient times we can have an RPE filtration $$N = K_0 \subset K_1 \subset \cdots \subset K_{n-r} \overset{{\pd p}_1} \subset K_{n-r+1} \overset{{\pd p}_2}\subset \cdots \subset K_{n-1} \overset{{\pd p}_r} \subset K_{n} = M$$ of $M$ over $N$. Then $$K_{n-r} \overset{{\pd p}_1} \subset K_{n-r+1} \overset{{\pd p}_2}\subset \cdots \subset K_{n-1} \overset{{\pd p}_r} \subset K_{n} = M$$ is an RPE filtration. So if $K= K_{n-r}$, then $K$ is a submodule of $M$ containing $N$ with ${\mathcal{P}}_{M}(K) = \pp r$.
\end{proof}

Now we show that for a prime ideal $\pd p$ in $R$, $\, \pd p \in \supp M$ is a necessary and sufficient condition for the existence of a submodule $N$ in $M$ with ${\mathcal{P}}_{M}(N) = \pd p$. More generally, we have the following result.

\begin{proposition}\label{proposition1}
Let $M$ be an $R$-module and ${\pd p}_1 , \dots , {\pd p}_n$ be prime ideals in $R$ such that ${\pd p}_i \not \subseteq {\pd p}_j$ for every $i, j \in \{ 1, \dots, n \}$ with $ i \neq j$. Then the following are equivalent:
\begin{enumerate}[(i)]
\item $\{{\pd p}_1 , \dots , {\pd p}_n\} \subseteq \supp{M}$;
\item ${\pd p}_i \in \supp{M/{\pp n M}}$ for every $1 \leq i \leq n$;
\item ${\pd p}_i$ is minimal in $\ass{M/{\pp n M}}$ for every $1 \leq i \leq n$;
\item There exists a submodule $N$ in $M$ with ${\mathcal{P}}_M(N) = \pp n$.
\end{enumerate}
\end{proposition}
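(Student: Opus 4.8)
The plan is to prove the chain of implications $(\mathrm{i})\Rightarrow(\mathrm{ii})\Rightarrow(\mathrm{iii})\Rightarrow(\mathrm{iv})\Rightarrow(\mathrm{i})$; throughout, abbreviate $\mathfrak{a}=\pp n$. The two implications on the left rest on the elementary identity
\[
\supp{M/\mathfrak{a}M}=\supp{M}\cap V(\mathfrak{a}),
\]
which one checks by localization: for a prime $\mathfrak{q}$ of $R$ the module $(M/\mathfrak{a}M)_{\mathfrak{q}}=M_{\mathfrak{q}}/\mathfrak{a}M_{\mathfrak{q}}$ is nonzero exactly when $M_{\mathfrak{q}}\neq 0$ and $\mathfrak{a}\subseteq\mathfrak{q}$ --- if $\mathfrak{a}\not\subseteq\mathfrak{q}$ then $\mathfrak{a}R_{\mathfrak{q}}=R_{\mathfrak{q}}$, while if $\mathfrak{a}\subseteq\mathfrak{q}$ then $\mathfrak{a}R_{\mathfrak{q}}$ lies in the maximal ideal of $R_{\mathfrak{q}}$ and Nakayama's lemma applies to the finitely generated $R_{\mathfrak{q}}$-module $M_{\mathfrak{q}}$.

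For $(\mathrm{i})\Rightarrow(\mathrm{ii})$: each $\mathfrak{p}_i$ contains $\mathfrak{a}$ and lies in $\supp M$, so the displayed identity gives $\mathfrak{p}_i\in\supp{M/\mathfrak{a}M}$ at once. For $(\mathrm{ii})\Rightarrow(\mathrm{iii})$: fix $i$ and suppose $\mathfrak{q}\in\supp{M/\mathfrak{a}M}$ with $\mathfrak{q}\subseteq\mathfrak{p}_i$; then $\pp n=\mathfrak{a}\subseteq\mathfrak{q}$, and since $\mathfrak{q}$ is prime, $\mathfrak{p}_j\subseteq\mathfrak{q}\subseteq\mathfrak{p}_i$ for some $j$, whence $j=i$ and $\mathfrak{q}=\mathfrak{p}_i$ by the incomparability hypothesis. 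Thus each $\mathfrak{p}_i$ is a minimal element of $\supp{M/\mathfrak{a}M}$, hence lies in $\ass{M/\mathfrak{a}M}$ and is minimal there.

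For $(\mathrm{iii})\Rightarrow(\mathrm{iv})$: apply Lemma~\ref{minimal} to the submodule $\mathfrak{a}M$ of $M$ --- the primes $\mathfrak{p}_1,\dots,\mathfrak{p}_n$ are distinct (being pairwise incomparable) minimal primes in $\ass{M/\mathfrak{a}M}$, so the lemma yields a submodule $N$ of $M$ with $\mathcal{P}_M(N)=\pp n$. For $(\mathrm{iv})\Rightarrow(\mathrm{i})$: pick any RPE filtration of $M$ over $N$; by the definition of $\mathcal{P}_M(N)$ its set of prime ideals is $\{\mathfrak{p}_1,\dots,\mathfrak{p}_n\}$, and this set equals $\ass{M/N}$ by the remarks preceding Lemma~\ref{interchange}. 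Since $\ass{M/N}\subseteq\supp{M/N}\subseteq\supp M$, condition $(\mathrm{i})$ follows.

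Modulo Lemmas~\ref{interchange} and~\ref{minimal} the argument is short, and I do not anticipate a genuine obstacle. The one place demanding care is $(\mathrm{ii})\Rightarrow(\mathrm{iii})$: upgrading ``$\mathfrak{p}_i$ belongs to $\supp{M/\mathfrak{a}M}$'' to ``$\mathfrak{p}_i$ is minimal in that support'' is precisely where the hypothesis that the $\mathfrak{p}_i$ are pairwise incomparable enters, and one should also invoke the standard fact that a minimal prime of the support of a finitely generated module is an associated prime.
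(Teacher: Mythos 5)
Your proposal is correct and follows essentially the same route as the paper: the localization/Nakayama argument for (i)$\Rightarrow$(ii), the incomparability argument identifying the $\mathfrak{p}_i$ as minimal elements of $\supp{M/\pp n M}$ (hence minimal associated primes) for (ii)$\Rightarrow$(iii), Lemma~\ref{minimal} applied to $\pp n M$ for (iii)$\Rightarrow$(iv), and the identification of the RPE-filtration primes with $\ass{M/N}\subseteq\supp M$ for (iv)$\Rightarrow$(i). The only cosmetic difference is that you package the first step as the identity $\supp{M/\pp n M}=\supp M\cap V(\pp n)$, whereas the paper argues the contrapositive directly at $\mathfrak{p}_i$.
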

\begin{proof}
(i) $\Rightarrow$ (ii): Suppose ${\pd p}_i \notin \supp{M/{\pp n M}}$ for some $i$. Then ${({M}/{{\pp n M}})}_{{\pd p}_i} = 0$. So we get $M_{{\pd p}_i} = (\pp n) M_{{\pd p}_i}$. Since $(\pp n)_{{\pd p}_i} \subseteq {\pd p}_iR_{{\pd p}_i}$, by Nakayama's lemma $M_{{\pd p}_i} = 0$. Therefore ${\pd p}_i \notin \supp{M}$.

(ii) $\Rightarrow$ (iii): If ${\pd q} \in \supp{M/{\pp n M}}$, then $\pp n \subseteq \pd q$, and therefore $\pd q$ contains some ${\pd p}_i$. So the set of minimal elements of $\supp{M/{\pp n M}}$ is contained in $\{{\pd p}_1 , \dots , {\pd p}_n\}$. Since ${\pd p}_i \not \subseteq {\pd p}_j$ for all $ i \neq j$, ${\pd p}_1 , \dots , {\pd p}_n$ are minimal elements in $\supp{M/{\pp n M}}$. Therefore ${\pd p}_1 , \dots , {\pd p}_n$ are minimal in $\ass{M/{\pp n M}}$.

(iii) $\Rightarrow$ (iv): Since ${\pd p}_1 , \dots , {\pd p}_n$ are minimal in $\ass{M/{\pp n M}}$, by Lemma \ref{minimal}, there exists a submodule $N$ of $M$ with ${\mathcal{P}}_M(N) = \pp n$.

(iv) $\Rightarrow$ (i): Since ${\pd p}_1 , \dots , {\pd p}_n$ are the prime ideals which occur in an RPE filtration of $M$ over $N$, $\{{\pd p}_1 , \dots , {\pd p}_n\} = \ass{M/N} \subseteq \supp{M}$.
\end{proof}

\begin{corollary}\label{cor2.1}
Let $\pd p$ be a prime ideal in $R$. Then $\pd p \in \supp{M}$ if and only if there exists a submodule $N$ in $M$ with ${\mathcal{P}}_{M}(N) = \pd p$.
\end{corollary}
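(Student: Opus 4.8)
The plan is to derive Corollary~\ref{cor2.1} as the special case $n = 1$ of Proposition~\ref{proposition1}. When $n = 1$ the hypothesis ``${\pd p}_i \not\subseteq {\pd p}_j$ for $i \neq j$'' is vacuous, so the proposition applies with no side conditions. I would observe that condition (i) of the proposition, $\{{\pd p}_1\} \subseteq \supp{M}$, is exactly $\pd p \in \supp{M}$, and condition (iv), the existence of a submodule $N$ with ${\mathcal{P}}_M(N) = {\pd p}_1 = \pd p$, is exactly the conclusion we want. Since (i) $\Leftrightarrow$ (iv) in the proposition, the corollary follows immediately.

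Concretely, the proof I would write is essentially one sentence: apply Proposition~\ref{proposition1} with $n = 1$ and ${\pd p}_1 = \pd p$, noting that the non-containment hypothesis holds trivially, and read off the equivalence of (i) and (iv). I might add a remark tracing through what the submodule $N$ looks like in this case --- by Lemma~\ref{minimal} (invoked inside the proof of the proposition via (iii) $\Rightarrow$ (iv)), one can take $N$ to be the appropriate term of an RPE filtration of $M$ over $\pd p M$, so in particular one valid choice is any submodule $N \supseteq \pd p M$ sitting in an RPE filtration with the single prime $\pd p$; but this is optional colour rather than something the statement requires.

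There is essentially no obstacle here: the corollary is a direct specialization, and all the work was done in establishing the proposition. The only thing to be careful about is confirming that the hypothesis of Proposition~\ref{proposition1} is genuinely vacuous when $n = 1$ --- there are no pairs $i \neq j$ in $\{1\}$ --- so that nothing needs to be checked before invoking it. I would therefore keep the proof to one or two lines.

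\begin{proof}
Take $n = 1$ and ${\pd p}_1 = \pd p$ in Proposition~\ref{proposition1}. The hypothesis that ${\pd p}_i \not\subseteq {\pd p}_j$ for all $i \neq j$ is vacuously satisfied, so the proposition applies. The equivalence of its conditions (i) and (iv) states precisely that $\pd p \in \supp{M}$ if and only if there exists a submodule $N$ in $M$ with ${\mathcal{P}}_M(N) = \pd p$.
\end{proof}
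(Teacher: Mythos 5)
Your proof is correct and matches the paper's intent exactly: the paper states the corollary with no separate proof, treating it as the immediate $n=1$ specialization of Proposition~\ref{proposition1}, which is precisely your argument. Your observation that the non-containment hypothesis is vacuous for $n=1$ is the only point needing checking, and you handle it correctly.
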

In Proposition \ref{proposition1}, the prime ideals are distinct. Now we find conditions for the product of prime ideals that need not be distinct to be a generalized prime ideal factorization of some submodule.
\begin{proposition}\label{prop2.6}
Let $\pd p$ be a prime ideal in $R$ and $r$ be a positive integer. If $\pd p \in \ass{{{\pd p}^{r-1}M}/{{\pd p}^rM}}$, then there exists a submodule $N$ in $M$ such that ${\mathcal{P}}_{M}(N) = {\pd p}^r$.
\end{proposition}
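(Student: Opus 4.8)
The plan is to build an RPE filtration of $M$ over a suitable submodule $N$ by repeatedly invoking the hypothesis $\pd p \in \ass{{\pd p}^{r-1}M/{\pd p}^rM}$, and then read off $N$ as the bottom of a length-$r$ tail of that filtration. First I would set $M^{(k)} = {\pd p}^k M$ for $0 \le k \le r$, so that $M^{(0)} = M \supseteq M^{(1)} \supseteq \cdots \supseteq M^{(r)}$, with each quotient $M^{(k-1)}/M^{(k)}$ a module annihilated by $\pd p$. The key observation is that $\pd p M^{(k-1)} \subseteq M^{(k)}$, so $M^{(k)}$ contains a submodule whose quotient into $M^{(k-1)}$ is $\pd p$-torsion; I want to show $\pd p \in \ass{M^{(k-1)}/M^{(k)}}$ for \emph{every} $k$ with $1 \le k \le r$, not just $k = r$. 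Indeed, if $\pd p \in \ass{{\pd p}^{r-1}M/{\pd p}^r M}$ then in particular ${\pd p}^{r-1}M \ne {\pd p}^r M$, so by Nakayama $M_{\pd p} \ne 0$, i.e.\ $\pd p \in \supp M$; and since ${\pd p}^{k-1}M/{\pd p}^k M$ is a nonzero (again by Nakayama, because its localization is ${\pd p}^{k-1}M_{\pd p}/{\pd p}^k M_{\pd p} \ne 0$ by Nakayama over $R_{\pd p}$) module annihilated by ${\pd p}$, every associated prime of it contains $\pd p$ and, being in $\supp R/\pd p$, equals $\pd p$. Hence $\ass{M^{(k-1)}/M^{(k)}} = \{\pd p\}$ for each $1 \le k \le r$.

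Next I would use the criterion recalled in the introduction (\cite[Lemma~3]{A}): since $\pd p \in \ass{M^{(k-1)}/M^{(k)}}$, there is a $\pd p$-prime extension of $M^{(k)}$ inside $M^{(k-1)}$, and passing to a maximal one — which exists by Noetherianness — gives a maximal $\pd p$-prime extension. Because $\ass{M^{(k-1)}/M^{(k)}} = \{\pd p\}$, the prime $\pd p$ is trivially the maximal element of this associated-prime set, so this maximal $\pd p$-prime extension is in fact \emph{regular}. Chaining these for $k = r, r-1, \dots, 1$ produces a filtration
\[
M^{(r)} = {\pd p}^r M = L_0 \overset{\pd p}\subset L_1 \overset{\pd p}\subset \cdots \text{ (inside } M^{(r-1)}),\ \dots
\]
but here I must be careful: the maximal regular $\pd p$-prime extension of ${\pd p}^k M$ need not land inside ${\pd p}^{k-1}M$ as a \emph{single} step. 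The cleaner route is: take an RPE filtration of $M^{(k-1)}$ over $M^{(k)}$ for each $k$ (which exists and consists solely of $\pd p$-steps since $\ass{M^{(k-1)}/M^{(k)}} = \{\pd p\}$), concatenate these $r$ filtrations to obtain an RPE filtration of $M = M^{(0)}$ over ${\pd p}^r M = M^{(r)}$ in which every occurring prime is $\pd p$. Let $m$ be its total length. By Lemma~\ref{lemma1}, the submodule at height $m - r$ from the bottom, call it $N$, satisfies $N = \{x \in M : {\pd p}^{\,r} x \subseteq {\pd p}^r M\}$... but more to the point, the top $r$ steps $N \overset{\pd p}\subset \cdots \overset{\pd p}\subset M$ form an RPE filtration of $M$ over $N$, whence ${\mathcal P}_M(N) = {\pd p}^r$.

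The main obstacle I anticipate is verifying that concatenating the per-layer RPE filtrations yields a genuine RPE filtration of $M$ over ${\pd p}^r M$ — that is, that each step remains a \emph{regular} maximal $\pd p$-prime extension \emph{in $M$}, not merely in the layer $M^{(k-1)}$. Regularity is automatic because the relevant $\ass$-set is the singleton $\{\pd p\}$ at every stage (using \cite[Proposition~14]{A} to track $\ass{M/M_{i-1}}$, which stays a subset of $\{\pd p\}$), so $\pd p$ is always maximal among the remaining associated primes; maximality of each prime extension \emph{in $M$} requires that one cannot enlarge a $\pd p$-prime submodule $L_i$ of $M$ beyond the layer boundary while staying a $\pd p$-prime extension — but any such enlargement would have its quotient annihilated by $\pd p$, hence would still be contained in $\{x : \pd p x \subseteq L_i\}$, and a short argument with $(L_i : M) = \pd p$ together with the layer structure ${\pd p}^{k-1}M/{\pd p}^k M$ pins it down. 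Once these bookkeeping points are settled, the conclusion ${\mathcal P}_M(N) = {\pd p}^r$ is immediate from Lemma~\ref{lemma1} and the definition of generalized prime ideal factorization.
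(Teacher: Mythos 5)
There is a genuine gap, and it sits at the very first step: your claim that $\ass{{\pd p}^{k-1}M/{\pd p}^kM}=\{\pd p\}$ for every $1\le k\le r$ is false, and so is the inference you use to get it. A nonzero module annihilated by $\pd p$ only has associated primes \emph{containing} $\pd p$; lying in $\supp{R/\pd p}=V(\pd p)$ does not force equality with $\pd p$. The paper's own Example \ref{exa1} is a direct counterexample to your inference: there $\pd p/{\pd p}^2$ is nonzero and killed by $\pd p$, yet $\ass{\pd p/{\pd p}^2}=\{(\overline{x},\overline{y},\overline{z})\}\ne\{\pd p\}$. The hypothesis only guarantees that $\pd p$ is \emph{one} associated prime of the top layer ${\pd p}^{r-1}M/{\pd p}^rM$; it gives nothing for the lower layers, and even at the top layer (and for $M/{\pd p}^rM$ itself) there may be embedded associated primes strictly containing $\pd p$. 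Consequently your per-layer filtrations need not consist solely of $\pd p$-steps, the concatenated filtration of $M$ over ${\pd p}^rM$ need not have all primes equal to $\pd p$, and the regularity claim ``$\ass{M/M_{i-1}}$ stays a subset of $\{\pd p\}$'' fails; the whole layer-by-layer construction collapses at this point (your separately flagged worry about maximality in $M$ is a second, lesser issue that was also left unresolved).

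The paper avoids the layers ${\pd p}^kM$ altogether: it defines $N=\{x\in M\mid ({\pd p}^rM:x)\not\subseteq \pd p\}$, the $\pd p$-saturation of ${\pd p}^rM$ in $M$, proves directly that $N$ is a proper submodule with $\ass{M/N}=\{\pd p\}$ (every associated prime of $M/N$ contains $\pd p$ because ${\pd p}^rM\subseteq N$, and is contained in $\pd p$ by the saturation property), so that an RPE filtration of $M$ over $N$ uses only $\pd p$ and ${\mathcal{P}}_M(N)={\pd p}^k$ for some $k$; it then uses the hypothesis $\pd p\in\ass{{\pd p}^{r-1}M/{\pd p}^rM}$ to rule out $k<r$, and Lemma \ref{lemma1} together with ${\pd p}^rM\subseteq N$ to conclude $M_r=M$, hence $k=r$. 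If you want to salvage your approach, you would have to replace the modules ${\pd p}^kM$ by their $\pd p$-saturations (or work inside $M_{\pd p}$ and pull back), which essentially reproduces the paper's construction.
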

\begin{proof}
Let $N = \{x \in M \mid ({\pd p}^r M : x) \not\subseteq \pd p \}$. Let $x_1, x_2 \in N$ and $u \in R$. Then there exists $a_1 \in ({\pd p}^r M : x_1) \setminus \pd p$ and $a_2 \in ({\pd p}^r M : x_2) \setminus \pd p$. Then $a_1a_2 \in ({\pd p}^r M : ux_1 + x_2) \setminus \pd p$, which implies that $ux_1 + x_2 \in N$. Hence $N$ is a submodule of $M$. Since $\pd p \in \ass{{{\pd p}^{r-1}M}/{{\pd p}^rM}}$, there exists $x \in {\pd p}^{r-1}M$ such that $\pd p = ({\pd p}^rM : x)$. This implies $x \notin N$. Therefore $N$ is a proper submodule of $M$. Also, $N \supseteq {\pd p}^r M$.

We claim that $\ass{M/N} = \{ \pd p \}$. Let $\pd q \in \ass{M/N}$. Then ${\pd p}^r \subseteq \pd q$ since ${\pd p}^r M \subseteq N$. Therefore ${\pd p} \subseteq \pd q$. Now $\pd q = (N : z)$ for some $z \in M$, $z \notin N$, that is, $({\pd p}^r M : z) \subseteq \pd p$. Let $a \in \pd q$. Then $az \in N$, which gives $({\pd p}^r M : az) \not\subseteq \pd p$. Let $b \in R \setminus \pd p$ such that $baz \in {\pd p}^r M$, i.e., $ba \in ({\pd p}^r M : z) \subseteq \pd p$. This implies $a \in \pd p$. Therefore $\pd q \subseteq \pd p$. Hence $\ass{M/N} = \{ \pd p \}$.

If $N = M_0 \overset{{\pd p}_1} \subset M_1  \subset \cdots \subset M_{k-1} \overset{{\pd p}_k} \subset M_k = M$ is an RPE filtration of $M$ over $N$, then $\{{\pd p}_1, \dots, {\pd p}_k \} = \ass{M/N}= \{ \pd p \}$. So ${\mathcal{P}}_{M}(N) = {\pd p}^k$. Suppose $k < r$. Then ${\pd p}^{r-1} \subseteq {\pd p}^k$, which implies ${\pd p}^{r-1}M \subseteq {\pd p}^k M \subseteq N$. So, for every $x \in {\pd p}^{r-1}M$, $({\pd p}^r M : x) \not\subseteq \pd p$. But $\pd p \in \ass{{{\pd p}^{r-1}M}/{{\pd p}^rM}}$ implies $\pd p = ({\pd p}^rM : x)$ for some $x \in {\pd p}^{r-1}M$, a contradiction. Therefore, $k \geq r$, and this implies $M_r \subseteq M_k = M$. By Lemma \ref{lemma1}, $M_r = \{x \in M \mid {\pd p}^r x \subseteq N \}$. For any $x \in M$, ${\pd p}^r x \subseteq {\pd p}^r M \subseteq N$. Therefore $M_r = M$. So, $N \overset{\pd p} \subset M_1 \overset{\pd p} \subset \cdots \overset{\pd p} \subset M_r = M $ is an RPE filtration of $M$ over $N$, and hence ${\mathcal{P}}_{M}(N) = {\pd p}^r$.
\end{proof}

In Example \ref{exa1}, $\pd p \notin \ass{{\pd p}/{\pd p}^2} = \{(\overline{x}, \overline{y}, \overline{z}) \}$. So $\pd p$ need not be an element of $\ass{{{\pd p}^{r-1}M}/{{\pd p}^rM}}$ even if ${{\pd p}^rM} \subsetneq {{\pd p}^{r-1}M}$. 

\begin{theorem}\label{mainresult2}
Let $M$ be an $R$-module, ${\pd p}_1, \dots, {\pd p}_n$ be distinct prime ideals in $R$ ordered as ${\pd p}_i \not \subset {\pd p}_j$ for $i < j$, and $r_1, \dots , r_n$ be positive integers. If ${\pd p}_i \in \supp{{{\pd p}_i}^{r_i - 1} {{\pd p}_{i+1}}^{r_{i+1}} \cdots {{\pd p}_n}^{r_n}M}$ for $i = 1, \dots , n$, then there exists a submodule $N$ in $M$ such that ${\mathcal{P}}_M(N) = \ppp n$.
\end{theorem}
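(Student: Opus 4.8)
The plan is to induct on $n$, peeling the exponent $r_1$ of the prime ${\pd p}_1$ — which, by the chosen ordering, is not contained in any of ${\pd p}_2,\dots,{\pd p}_n$ — off the bottom of a suitable RPE filtration. The first thing I would do is record a small observation that converts the support hypotheses into the hypothesis of Proposition~\ref{prop2.6}: for any prime $\pd p$, positive integer $r$, and finitely generated module $L$, if $\pd p\in\supp{{\pd p}^{r-1}L}$, then ${({\pd p}^{r-1}L)}_{\pd p}\neq 0$, so by Nakayama's lemma ${({\pd p}^{r-1}L/{\pd p}^{r}L)}_{\pd p}\neq 0$; since ${\pd p}^{r-1}L/{\pd p}^{r}L$ is annihilated by $\pd p$, every prime of its support contains $\pd p$, hence $\pd p$ is a minimal — and therefore associated — prime of it, so Proposition~\ref{prop2.6} applies to $L$. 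In particular this already settles the case $n=1$.

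For $n\ge 2$, assuming the statement for $n-1$ primes, I would apply it to ${\pd p}_2,\dots,{\pd p}_n$ with exponents $r_2,\dots,r_n$ — the hypotheses for $i=2,\dots,n$ are exactly those required, and the ordering restricts — to obtain a submodule $K\subsetneq M$ with ${\mathcal{P}}_M(K)=\ppr{2}{n}$. Fixing an RPE filtration $K=K_0\subset\cdots\subset K_t=M$ of $M$ over $K$ and applying Lemma~\ref{lemma1} to its top term gives $\ppr{2}{n}M\subseteq K$, while $\ass{M/K}=\{{\pd p}_2,\dots,{\pd p}_n\}$. Since ${\pd p}_1^{r_1-1}\ppr{2}{n}M\subseteq{\pd p}_1^{r_1-1}K$, the hypothesis for $i=1$ gives ${\pd p}_1\in\supp{{\pd p}_1^{r_1-1}K}$, so by the observation above Proposition~\ref{prop2.6} applies to the module $K$ and yields a proper submodule $N$ of $K$ with ${\mathcal{P}}_K(N)={\pd p}_1^{r_1}$, exhibited by an RPE filtration
\[
N=N_0\overset{{\pd p}_1}\subset N_1\overset{{\pd p}_1}\subset\cdots\overset{{\pd p}_1}\subset N_{r_1}=K
\]
of $K$ over $N$.

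It would then remain to check that the concatenation $N=N_0\subset\cdots\subset N_{r_1}=K=K_0\subset\cdots\subset K_t=M$ is an RPE filtration of $M$ over $N$; counting primes would then give ${\mathcal{P}}_M(N)=\ppp{n}$. The upper steps $K_{l-1}\subset K_l$ need no attention, since being a maximal prime extension and being regular are properties relative to $M$ and to $\ass{M/K_{l-1}}$, which do not change when the base is taken to be $N$. For a lower step $N_{j-1}\overset{{\pd p}_1}\subset N_j$, regularity is straightforward: ${\pd p}_1\in\ass{M/N_{j-1}}$ by \cite[Lemma~3]{A}, while the exact sequence $0\to K/N_{j-1}\to M/N_{j-1}\to M/K\to 0$, together with $\ass{K/N_{j-1}}=\{{\pd p}_1\}$ (the restriction property of RPE filtrations recalled in the introduction) and $\ass{M/K}=\{{\pd p}_2,\dots,{\pd p}_n\}$, forces $\ass{M/N_{j-1}}\subseteq\{{\pd p}_1,\dots,{\pd p}_n\}$, in which ${\pd p}_1$ is a maximal element because ${\pd p}_1\not\subseteq{\pd p}_j$ for every $j\ge 2$. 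The substantive point is the maximality of $N_{j-1}\overset{{\pd p}_1}\subset N_j$ inside $M$: if $N'\supsetneq N_j$ were a ${\pd p}_1$-prime extension of $N_{j-1}$ in $M$, then ${\pd p}_1N'\subseteq N_{j-1}\subseteq K$, so $(N'+K)/K\cong N'/(N'\cap K)$ would be a nonzero submodule of $M/K$ annihilated by ${\pd p}_1$; any associated prime of it lies in $\ass{M/K}=\{{\pd p}_2,\dots,{\pd p}_n\}$ yet contains ${\pd p}_1$, which is impossible by the ordering. Hence $N'\subseteq K$, contradicting the maximality of $N_j$ over $N_{j-1}$ inside $K$.

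I expect this last point — that the low ${\pd p}_1$-steps stay maximal prime extensions once the ambient module is enlarged from $K$ to $M$ — to be the main obstacle of the argument, and it is precisely there that the distinctness of ${\pd p}_1,\dots,{\pd p}_n$ and the ordering ${\pd p}_i\not\subset{\pd p}_j$ $(i<j)$ are used; everything else is bookkeeping with the RPE-filtration facts recalled in the introduction.
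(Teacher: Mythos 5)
Your proposal is correct and takes essentially the same route as the paper: induction on $n$, an application of Proposition~\ref{prop2.6} to the submodule $K$ furnished by the inductive hypothesis (after converting the support hypothesis into ${\pd p}_1 \in \ass{{{\pd p}_1}^{r_1-1}K/{{\pd p}_1}^{r_1}K}$), and then the verification that the concatenated filtration is an RPE filtration of $M$ over $N$, the crux being that the lower ${\pd p}_1$-steps stay maximal prime extensions in $M$. The only difference is local: where the paper chases an element of the larger extension lying outside $K$ down the upper filtration using elements of ${\pd p}_1 \setminus {\pd p}_i$, you exclude such an element by noting that $(N'+K)/K$ would be a nonzero submodule of $M/K$ annihilated by ${\pd p}_1$, whose associated prime would have to lie in $\{{\pd p}_2,\dots,{\pd p}_n\}$ while containing ${\pd p}_1$ --- an equally valid, and arguably cleaner, verification of the same step.
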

\begin{proof}
We prove by induction on $n$. If $n=1$, ${\pd p}_1 \in \supp{{{\pd p}_1}^{r_1 - 1}M}$ and by Proposition \ref{proposition1}, ${\pd p}_1 \in \ass{{{{\pd p}_1}^{{r_1}-1}M}/{{{\pd p}_1}^{r_1}M}}$. Then by Proposition \ref{prop2.6}, there exists a submodule $N$ in $M$ with ${\mathcal{P}}_{M}(N) = {{\pd p}_1}^{r_1}$. Now let $n > 1$, and assume that the result is true for $n-1$ prime ideals. Then there exists a submodule $L$ in $M$ with ${\mathcal{P}}_M(L) = {{\pd p}_2}^{r_2} \cdots {{\pd p}_n}^{r_n}$. That is, we have an RPE filtration
\begin{equation}\label{eqnn2.4}
L \overset{{\pd p}_2}\subset L^{(2)}_1 \overset{{\pd p}_{2}}\subset \cdots \overset{{\pd p}_{2}}\subset L^{(2)}_{r_{2}} \overset{{\pd p}_{3}}\subset L^{(3)}_1 \overset{{\pd p}_{3}}\subset \cdots \subset L^{(n)}_{r_n - 1} \overset{{\pd p}_{n}}\subset L^{(n)}_{r_n} = M.
\end{equation}
Then ${{\pd p}_2}^{r_2} \cdots {{\pd p}_n}^{r_n}M \subseteq L$.

So, we have $\ann{{{\pd p}_1}^{r_1 - 1}L} \subseteq  \ann{{{\pd p}_1}^{r_1 - 1} {{\pd p}_2}^{r_2} \cdots {{\pd p}_n}^{r_n} M} \subseteq {\pd p}_1$ since ${\pd p}_1 \in \supp{ {{\pd p}_1}^{r_1 - 1}{{\pd p}_2}^{r_2} \cdots {{\pd p}_n}^{r_n} M}$. That is, ${\pd p}_1 \in \supp{{{\pd p}_1}^{r_1 - 1}L}$, and by Proposition \ref{proposition1}, ${\pd p}_1 \in \ass{{{\pd p}_1}^{r_1 - 1}L/{{\pd p}_1}^{r_1}L}$. Then by Proposition \ref{prop2.6}, there exists a submodule $N$ in $L$ such that ${\mathcal{P}}_{L}(N) = {{\pd p}_1}^{r_1}$. That is, we have the RPE filtration
\begin{equation}\label{eqn2.44}
N \overset{{\pd p}_{1}}\subset L^{(1)}_1 \overset{{\pd p}_{1}}\subset L^{(1)}_2 \subset \cdots \overset{{\pd p}_{1}}\subset L^{(1)}_{r_{1}} = L.
\end{equation}

Next, we show that
\begin{multline}\label{eqnn2.6}
N=L^{(1)}_0 \overset{{\pd p}_{1}}\subset L^{(1)}_1 \overset{{\pd p}_{1}}\subset L^{(1)}_2 \subset \cdots \overset{{\pd p}_{1}}\subset L^{(1)}_{r_{1}} = L \overset{{\pd p}_{2}}\subset L^{(2)}_1 \overset{{\pd p}_{2}}\subset \cdots \\ \overset{{\pd p}_{2}}\subset L^{(2)}_{r_{2}} \overset{{\pd p}_{3}}\subset L^{(3)}_1 \subset \cdots \overset{{\pd p}_{n-1}}\subset L^{(n-1)}_{r_{n-1}} \overset{{\pd p}_{n}}\subset L^{(n)}_1 \overset{{\pd p}_{n}}\subset \cdots \overset{{\pd p}_{n}}\subset L^{(n)}_{r_n} = M
\end{multline}
is an RPE filtration of $M$ over $N$, which would imply that ${\mathcal{P}}_M(N) = \ppp n$. Since the filtration (\ref{eqnn2.4}) is already an RPE filtration, it is enough to show that $L^{(1)}_{j-1} \overset{{\pd p}_1}\subset L^{(1)}_j$ is a regular prime extension in $M$ for $1 \leq j \leq r_1$.

From (\ref{eqn2.44}) we have that $L^{(1)}_{j-1} \subset L^{(1)}_j$ is a ${\pd p}_1$-prime extension for every $1 \leq j \leq r_1$. Suppose $L^{(1)}_{j-1} \overset{{\pd p}_1}\subset L^{(1)}_j$ is not a maximal ${\pd p}_1$-prime extension in $M$ for some $j$. Then there exists a submodule $K \supset L^{(1)}_j$ such that $L^{(1)}_{j-1} \overset{{\pd p}_1} \subset K$ is a ${\pd p}_1$-prime extension in $M$. Since $L^{(1)}_{j-1} \overset{{\pd p}_1}\subset L^{(1)}_j$ is a maximal ${\pd p}_1$-prime extension in $L$, $K \not \subseteq L$. Let $x \in K \setminus L$. For $2 \leq i \leq n$, since ${\pd p}_1 \not \subseteq {\pd p}_i$, there exists $p_i \in {\pd p}_1 \setminus {\pd p}_i$. Then $p_i x \in L^{(1)}_{j-1}$. Since $L^{(1)}_{j-1} \subset L$, from (\ref{eqnn2.4}) we get that $p_i x \in L^{(i)}_k$ for every $2 \leq i \leq n$, $1 \leq k \leq r_i$.

Since $p_n x \in L^{(n)}_{r_n - 1}$, $L^{(n)}_{r_n - 1} \subset M$ is a ${{\pd p}_{n}}$-prime extension, and $p_n \notin {\pd p}_n$, we have $x \in L^{(n)}_{r_n - 1}$. Then $p_n x \in L^{(n)}_{r_n - 2}$ and $L^{(n)}_{r_n - 2} \overset{{\pd p}_{n}}\subset L^{(n)}_{r_n - 1}$ is a prime extension implies $x \in L^{(n)}_{r_n - 2}$. Repeating this argument $r_n-3$ times, we get $x \in L^{(n-1)}_{r_{n-1}}$. 

Replacing $M$ by $L^{(n-1)}_{r_{n-1}}$ and $p_n$ by $p_{n-1}$ in the previous paragraph, we get $x \in L^{(n-2)}_{r_{n-2}}$. Continuing this process, finally we get $x \in L^{(1)}_{r_{1}} = L$, a contradiction. Therefore, $L^{(1)}_{j-1} \overset{{\pd p}_1}\subset L^{(1)}_j$ is a maximal prime extension in $M$ for every $1 \leq j \leq r_1$, and hence (\ref{eqnn2.6}) is an MPE filtration of $M$ over $N$.

So, for $1 \leq j \leq r_1$, we get $\ass{M/L^{(1)}_{j-1}} = \{ {\pd p}_1 , \dots , {\pd p}_n \}$ and since ${\pd p}_1 \not \subset {\pd p}_i$ for every $i>1$, ${\pd p}_1$ is maximal in $\ass{M/L^{(1)}_{j-1}}$. Therefore (\ref{eqnn2.6}) is an RPE filtration of $M$ over $N$. Hence ${\mathcal{P}}_M(N) = \ppp n$.
\end{proof}

The converse of Theorem \ref{mainresult2} does not hold. For example, if ${\pd p}_2 \subsetneq {\pd p}_1$ are prime ideals in a ring $R$ and $M$ is the $R$-module $\frac{R}{{\pd p}_2} \oplus \frac{R}{{\pd p}_2}$, then for its submodule $N = \frac{{\pd p}_1}{{\pd p}_2} \oplus 0$, we have the RPE filtration $$N = \dfrac{{\pd p}_1}{{\pd p}_2} \oplus 0 \quad \overset{{\pd p}_1} \subset \quad \dfrac{R}{{\pd p}_2} \oplus 0 \quad \overset{{\pd p}_2}\subset \quad \dfrac{R}{{\pd p}_2} \oplus \dfrac{R}{{\pd p}_2} = M$$ of $M$ over $N$. So we have ${\mathcal{P}}_M(N) = {\pd p}_1 {\pd p}_2$. But ${\pd p}_2 M = 0$. Therefore ${\pd p}_1 \notin \supp{{\pd p}_2 M}$.

Next, we show that if we assume further that ${\pd p}_i \not \subset {\pd p}_j$ for $i \neq j$, the converse of Theorem \ref{mainresult2} holds. We need the following lemma.
\begin{lemma}\cite[Lemma~2.8]{B}
If $N \overset{\pd p} \subset K$ is a regular $\pd p$-prime extension in $M$, then for any submodule $L$ of $M$, $N \cap L \overset{\pd p} \subset K \cap L$ is a regular $\pd p$-prime extension in $L$ when $N \cap L \neq K \cap L$. \label{lemma}
\end{lemma}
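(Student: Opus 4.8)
The plan is to prove, in order, that (a) $N\cap L\overset{\pd p}\subset K\cap L$ is a $\pd p$-prime extension in $L$, that (b) it is maximal, and that (c) $\pd p$ is a maximal element of $\ass{L/(N\cap L)}$; together these are exactly the assertion. The whole argument rests on one structural description of a regular prime extension, which I would isolate and prove first: \emph{if $N\overset{\pd p}\subset K$ is a regular $\pd p$-prime extension in $M$, then $K=\{x\in M\mid \pd p x\subseteq N\}$.}

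To prove this, note that $\subseteq$ is immediate from $\pd p K=(N:K)K\subseteq N$. For $\supseteq$, set $K'=\{x\in M\mid \pd p x\subseteq N\}$, so that $K\subseteq K'$ and $K'/N=(0:_{M/N}\pd p)$; it suffices to show that $N\overset{\pd p}\subset K'$ is a $\pd p$-prime extension in $M$, since then the maximality of $K$ forces $K=K'$. Because $\pd p\cdot(K'/N)=0$, every associated prime of the nonzero finitely generated module $K'/N$ contains $\pd p$; since also $\ass{K'/N}\subseteq\ass{M/N}$ and $\pd p$ is maximal in $\ass{M/N}$ (this is the point where regularity is used), we get $\ass{K'/N}=\{\pd p\}$. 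Hence $\pd p\subseteq\ann{K'/N}\subseteq\sqrt{\ann{K'/N}}=\pd p$, so $\ann{K'/N}=\pd p$, while the set of zerodivisors on $K'/N$ equals $\pd p$ as well; therefore $K'/N$ is a $\pd p$-prime module, as needed. The identical computation carried out inside $L$ gives $K\cap L=\{x\in L\mid \pd p x\subseteq N\cap L\}$.

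With these descriptions in hand the three points are routine. For (a), using $N\cap L\neq K\cap L$: if $a\in R$, $x\in K\cap L$ and $ax\in N\cap L$, then primeness of $N$ in $K$ gives either $x\in N$, whence $x\in N\cap L$, or $a\in(N:K)=\pd p$, whence $a(K\cap L)\subseteq(aK)\cap L\subseteq N\cap L$; and choosing any $x\in(K\cap L)\setminus(N\cap L)\subseteq K\setminus N$ shows conversely $(N\cap L:K\cap L)\subseteq\pd p$, so $(N\cap L:K\cap L)=\pd p$. For (b): if $N\cap L\overset{\pd p}\subset L'$ with $K\cap L\subseteq L'\subseteq L$, then $\pd p L'=(N\cap L:L')L'\subseteq N\cap L$, so $L'\subseteq\{x\in L\mid \pd p x\subseteq N\cap L\}=K\cap L$, i.e. $L'=K\cap L$; combined with (a), $K\cap L$ is a maximal $\pd p$-prime extension of $N\cap L$ in $L$. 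For (c): from $L/(N\cap L)\cong(L+N)/N\subseteq M/N$ we get $\ass{L/(N\cap L)}\subseteq\ass{M/N}$, and since $\pd p\in\ass{(K\cap L)/(N\cap L)}\subseteq\ass{L/(N\cap L)}$ by (a) while $\pd p$ is already maximal in the larger set $\ass{M/N}$, it is maximal in $\ass{L/(N\cap L)}$. Hence $N\cap L\overset{\pd p}\subset K\cap L$ is a regular $\pd p$-prime extension in $L$.

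I expect the main obstacle to be the structural fact above, specifically the realization that $\{x\in M\mid \pd p x\subseteq N\}$ is the correct description of a regular prime extension and that verifying $(0:_{M/N}\pd p)$ is genuinely $\pd p$-prime — not merely that it has $\pd p$ as unique associated prime — is precisely where maximality of $\pd p$ in $\ass{M/N}$ is indispensable; indeed the example following Theorem~\ref{mainresult2} shows the analogous conclusion fails once $\pd p$ is not maximal among the associated primes. Everything else reduces to manipulations with colon ideals together with the containment $\ass{L/(N\cap L)}\subseteq\ass{M/N}$.
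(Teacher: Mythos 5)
Your argument is correct, but note that the paper itself offers no proof of this statement: it is quoted verbatim as \cite[Lemma~2.8]{B}, so there is no internal proof to compare against. Your route is a sound self-contained one. The only substantive ingredient you prove from scratch, namely that a regular $\pd p$-prime extension satisfies $K=\{x\in M\mid \pd p x\subseteq N\}$, is exactly the one-step case of the quoted Lemma~\ref{lemma1} (i.e.\ \cite[Lemma~3.1]{B}); you could have obtained it by extending $N\overset{\pd p}\subset K$ to an RPE filtration of $M$ over $N$ and citing that lemma, but your direct derivation (every associated prime of $K'/N$ contains $\pd p$, lies in $\ass{M/N}$, and $\pd p$ is maximal there, whence $\ass{K'/N}=\{\pd p\}$, $\ann{K'/N}=\pd p$, and the zerodivisors on $K'/N$ are exactly $\pd p$) is correct and arguably cleaner. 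Steps (a), (b), (c) are all fine: the colon-ideal verification that $N\cap L$ is $\pd p$-prime in $K\cap L$ uses the hypothesis $N\cap L\neq K\cap L$ in the right place, maximality in $L$ follows from the description of $K\cap L$, and maximality of $\pd p$ in $\ass{L/(N\cap L)}$ follows from $L/(N\cap L)\cong (L+N)/N\subseteq M/N$. One small wording caveat: the identity $K\cap L=\{x\in L\mid \pd p x\subseteq N\cap L\}$ is not really an ``identical computation carried out inside $L$'' (that phrasing risks circularity, since maximality of $K\cap L$ in $L$ is not yet known); it follows immediately by intersecting the description $K=\{x\in M\mid \pd p x\subseteq N\}$ with $L$, using that $\pd p x\subseteq L$ automatically for $x\in L$. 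With that sentence rephrased, the proof is complete.
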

\begin{theorem}\label{converse}
Let $N$ be a submodule of $M$ with ${\mathcal{P}}_M(N) = \ppp n$, where ${\pd p}_1, \dots, {\pd p}_n$ are distinct prime ideals in $R$ and $r_1, \dots , r_n$ are positive integers. If all the prime ideals in $\ass{M/N}$ are minimal, then ${\pd p}_i \in \supp{\ppp {i-1} {{\pd p}_i}^{r_i - 1} \ppr {i+1} nM}$ for $i = 1, \dots, n$.
\end{theorem}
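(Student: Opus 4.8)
The plan is to fix $i \in \{1,\dots,n\}$ and produce, out of the given RPE filtration of $M$ over $N$, an RPE filtration that witnesses ${\pd p}_i$ appearing with multiplicity $r_i$ in a convenient position; then localize. First, since all primes in $\ass{M/N}=\{{\pd p}_1,\dots,{\pd p}_n\}$ are minimal, they are pairwise incomparable, so Lemma \ref{interchange} applies freely and I can reorder the RPE filtration of $M$ over $N$ so that the $r_i$ occurrences of ${\pd p}_i$ come last: there is an RPE filtration
\begin{equation*}
N = M_0 \subset \cdots \subset M_{t} \overset{{\pd p}_i}\subset M_{t+1} \overset{{\pd p}_i}\subset \cdots \overset{{\pd p}_i}\subset M_{t+r_i} = M,
\end{equation*}
where $t = \big(\sum_{j} r_j\big) - r_i$. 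By Lemma \ref{lemma1} applied to this reordered filtration, $M_t = \{x\in M \mid \mathfrak{a}\, x \subseteq N\}$ where $\mathfrak{a}$ is the product of all the prime ideals occurring before stage $t$, i.e. $\mathfrak{a} = \ppp{i-1}{{\pd p}_{i+1}}^{r_{i+1}}\cdots {{\pd p}_n}^{r_n}$ (the full factorization with the ${\pd p}_i$ factors removed). The upshot I want from this step is that $M_t$ is a ${\pd p}_i$-primary-type submodule in the sense that $\ass{M/M_t}=\{{\pd p}_i\}$ and the filtration $M_t \overset{{\pd p}_i}\subset \cdots \overset{{\pd p}_i}\subset M$ has length exactly $r_i$, so ${\mathcal P}_M(M_t) = {{\pd p}_i}^{r_i}$.

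Next I would localize at ${\pd p}_i$. Because ${\pd p}_i$ is minimal in $\supp{M/N}$ and the other primes are incomparable to it, after localizing at ${\pd p}_i$ every ${\pd p}_j$ with $j\neq i$ becomes the unit ideal, so $\mathfrak{a}_{{\pd p}_i}$ differs from ${{\pd p}_i R_{{\pd p}_i}}^{\,?}$ only through $\ppp{i-1}$ which also localizes to the unit ideal — hence $(M_t)_{{\pd p}_i} = M_{{\pd p}_i}$ actually need not be the point; rather, the useful consequence of $\ass{M/M_t}=\{{\pd p}_i\}$ together with length $r_i$ and Lemma \ref{lemma1} is that ${{\pd p}_i}^{r_i}M \subseteq M_t$ but ${{\pd p}_i}^{r_i - 1}M \not\subseteq M_t$; indeed if ${{\pd p}_i}^{r_i-1}M \subseteq M_t$ then $M_{t+r_i - 1}=M$ by Lemma \ref{lemma1}, contradicting that the chain from $M_t$ to $M$ has length $r_i$ with each step proper. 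So ${{\pd p}_i}^{r_i - 1}M \neq {{\pd p}_i}^{r_i}M \cap (\text{something})$ — more precisely, there is $x \in {{\pd p}_i}^{r_i-1}M$ with $\mathfrak{a}\, x \not\subseteq N$ while (since ${{\pd p}_i}^{r_i}M \subseteq N$ as $N \subseteq M_t$ forces… ) Let me phrase the clean target: I claim ${\pd p}_i \in \ass{{{\pd p}_i}^{r_i - 1}L_i / {{\pd p}_i}^{r_i}L_i}$ where $L_i := \{x\in M \mid \mathfrak{b}_i x \subseteq N\}$ and $\mathfrak{b}_i = \ppp{i-1}$, because this $L_i$ is exactly the submodule through which ${\pd p}_i$ is reached, and Lemma \ref{lemma} lets me restrict regular prime extensions. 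Then applying Proposition \ref{proposition1}(i)$\Leftrightarrow$(ii) type reasoning in reverse gives ${\pd p}_i \in \supp{{{\pd p}_i}^{r_i-1}L_i}$, and since $\ppp{i-1}\ppr{i+1}n M \subseteq L_i$ implies $\ann{{{\pd p}_i}^{r_i-1}\ppp{i-1}\ppr{i+1}nM} \supseteq \ann{{{\pd p}_i}^{r_i-1}L_i}$, wait — the inclusion of annihilators goes the right way only if $L_i \supseteq \ppp{i-1}\ppr{i+1}nM$, which does hold. Hence ${\pd p}_i \in \supp{{{\pd p}_i}^{r_i-1}\ppp{i-1}\ppr{i+1}nM} = \supp{\ppp{i-1}{{\pd p}_i}^{r_i-1}\ppr{i+1}nM}$, which is the assertion.

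The main obstacle I anticipate is identifying the correct intermediate submodule $L_i$ and proving ${{\pd p}_i}^{r_i - 1}M \not\subseteq M_t$ cleanly: this needs Lemma \ref{lemma1} applied to the \emph{reordered} filtration, and one must check that reordering (via Lemma \ref{interchange}, repeatedly) is legitimate precisely because minimality of all primes gives pairwise incomparability — that is where the hypothesis "all primes in $\ass{M/N}$ are minimal" is used, exactly as in Lemma \ref{minimal}. A secondary technical point is getting the annihilator/support inclusion in the right direction: the key is that $\ppp{i-1}\ppr{i+1}nM \subseteq L_i$ (because any generator of $M$, multiplied by $\mathfrak{b}_i = \ppp{i-1}$, lands in $N$ after further multiplication by the complementary factors, using ${\mathcal P}_M(N) = \ppp{n}$ and Lemma \ref{lemma1} again). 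Once those two points are in place, the support statement drops out by the same Nakayama argument as in Proposition \ref{proposition1}, and the induction-free argument for each fixed $i$ completes the proof; one then remarks that the order condition ${\pd p}_i \not\subset {\pd p}_j$ is automatic here from minimality, so no separate ordering hypothesis is needed in the conclusion.
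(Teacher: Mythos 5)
Your reduction steps are fine and partly match the paper: minimality gives pairwise incomparability, so Lemma \ref{interchange} lets you group the $r_i$ occurrences of ${\pd p}_i$ together (you put them last; the paper puts them first), and your Lemma \ref{lemma1} argument that ${\pd p}_i^{\,r_i-1}M \not\subseteq M_t$, i.e. that $\pd p_1^{r_1}\cdots{\pd p}_i^{\,r_i-1}\cdots{\pd p}_n^{r_n}M \not\subseteq N$, is correct and is in fact a cleaner route to the key non-containment than the paper's element-chasing with $a_j\in{\pd p}_j\setminus{\pd p}_i$. But the final step of your plan is invalid, and it is the whole point of the theorem. You want ${\pd p}_i\in\supp{\ppp{i-1}{{\pd p}_i}^{r_i-1}\ppr{i+1}{n}M}$, i.e. $\ann{\ppp{i-1}{{\pd p}_i}^{r_i-1}\ppr{i+1}{n}M}\subseteq{\pd p}_i$, and you try to get it from ${\pd p}_i\in\supp{{{\pd p}_i}^{r_i-1}L_i}$ together with a containment of the product module inside ${{\pd p}_i}^{r_i-1}L_i$. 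A containment $A\subseteq B$ only gives $\ann{B}\subseteq\ann{A}$, i.e. $\supp{A}\subseteq\supp{B}$; so knowing ${\pd p}_i$ is in the support of the \emph{larger} module says nothing about the support of the smaller product module. This monotonicity is exactly what makes the forward implication (Theorem \ref{mainresult2}) work, and it cannot be run in reverse — your own "wait" in that sentence flags the problem but then resolves it in the wrong direction. Two further holes: the asserted containment $\ppp{i-1}\ppr{i+1}{n}M\subseteq L_i$ with $L_i=\{x\mid \ppp{i-1}x\subseteq N\}$ is false in general (for $i=1$ it reads $\ppr{2}{n}M\subseteq N$; e.g. $R=k[x,y]$, $N=(xy)$, ${\pd p}_1=(x)$, ${\pd p}_2=(y)$), and the pivotal claim ${\pd p}_i\in\ass{{{\pd p}_i}^{r_i-1}L_i/{{\pd p}_i}^{r_i}L_i}$ is never proved — in the paper this kind of statement is \emph{deduced from} the support hypothesis you are trying to establish, so as written your argument is close to circular.

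The missing idea is to produce a regular ${\pd p}_i$-prime extension \emph{inside the product module itself} rather than transferring support from a larger module. The paper does this by putting the ${\pd p}_i$-steps first, so $N\overset{{\pd p}_i}\subset L^{(1)}_1$ is a regular prime extension, noting that $P:=\ppp{i-1}{{\pd p}_i}^{r_i-1}\ppr{i+1}{n}M\subseteq L^{(1)}_1$ (since ${\pd p}_iP\subseteq\ppp{n}M\subseteq N$), and that $N\cap P\subsetneq P=L^{(1)}_1\cap P$ by the non-containment; then Lemma \ref{lemma} gives a regular ${\pd p}_i$-prime extension $N\cap P\overset{{\pd p}_i}\subset P$ in $P$, and Corollary \ref{cor2.1} yields ${\pd p}_i\in\supp{P}$. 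Your non-containment argument plugs directly into this finish, so the proof can be repaired by replacing your annihilator-transfer step with the intersection argument via Lemma \ref{lemma}; as it stands, however, the proposal does not prove the theorem.
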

\begin{proof}
Since ${\pd p}_1, \dots, {\pd p}_n$ are minimal, for every $i$ we can reorder ${\pd p}_1, \dots, {\pd p}_n$ such that ${\pd p}_1 = {\pd p}_i$ and by Lemma \ref{interchange}, we have an RPE filtration $$N \overset{{\pd p}_{1}}\subset L^{(1)}_1 \overset{{\pd p}_{1}}\subset L^{(1)}_2 \subset \cdots \subset L^{(1)}_{r_1 - 1} \overset{{\pd p}_{1}}\subset L^{(1)}_{r_{1}}\overset{{\pd p}_{2}}\subset L^{(2)}_1 \subset \cdots \overset{{\pd p}_{n}}\subset L^{(n)}_{r_n} = M$$ of $M$ over $N$. So it is enough to show that ${\pd p}_1 \in \supp{{{\pd p}_1}^{r_1 - 1}$ ${{\pd p}_2}^{r_2}$ $\cdots$ ${{\pd p}_n}^{r_n} M}$. Clearly $\ppp nM \subseteq N$ and ${{{\pd p}_1}^{r_1 - 1}{{\pd p}_2}^{r_2} \cdots {{\pd p}_n}^{r_n} M} \subseteq L^{(1)}_1$.

We claim that ${{\pd p}_1}^{r_1 - 1}{{\pd p}_2}^{r_2} \cdots {{\pd p}_n}^{r_n} M \not \subseteq N$. Let $x \in L^{(1)}_{r_1} \setminus L^{(1)}_{r_1 - 1}$. Then ${{\pd p}_1}^{r_1 - 1} x \subseteq L^{(1)}_1$ and ${{\pd p}_1}^{r_1 - 1} x \not\subset N$. So there exists $b \in {{\pd p}_1}^{r_1 - 1}$ such that $bx \in L^{(1)}_1 \setminus N$. Choose $a_j \in {\pd p}_j \setminus {\pd p}_1$ for every $2 \leq j \leq n$ and let $a = \prod_{2 \leq j \leq n}a_j^{r_j}$. Then $bax \in {{{\pd p}_1}^{r_1 - 1}{{\pd p}_2}^{r_2} \cdots {{\pd p}_n}^{r_n} M}$. Suppose $bax \in N$. Then, since $bx \in L^{(1)}_1 \setminus N$ and $N \overset{{\pd p}_1} \subset L^{(1)}_1$ is a ${\pd p}_1$-prime extension, we get $a \in {\pd p}_1$, a contradiction. So $bax \notin N$. Therefore ${{\pd p}_1}^{r_1 - 1}{{\pd p}_2}^{r_2} \cdots {{\pd p}_n}^{r_n} M \not \subseteq N$. So we have
\begin{align*}
N\, \cap \;({{\pd p}_1}^{r_1 - 1}{{\pd p}_2}^{r_2} \cdots {{\pd p}_n}^{r_n} M)\; &\subsetneq \;{{\pd p}_1}^{r_1 - 1}{{\pd p}_2}^{r_2} \cdots {{\pd p}_n}^{r_n} M \\
 &= \;L^{(1)}_1\, \cap \;({{{\pd p}_1}^{r_1 - 1}{{\pd p}_2}^{r_2} \cdots {{\pd p}_n}^{r_n} M}).
\end{align*}
Then by Lemma \ref{lemma}, $$N \, \cap \; ({{\pd p}_1}^{r_1 - 1}{{\pd p}_2}^{r_2} \cdots {{\pd p}_n}^{r_n} M) \quad \overset{{\pd p}_1} \subset \quad {{{\pd p}_1}^{r_1 - 1}{{\pd p}_2}^{r_2} \cdots {{\pd p}_n}^{r_n} M}$$ is a regular ${\pd p}_1$-prime extension in ${{{\pd p}_1}^{r_1 - 1}{{\pd p}_2}^{r_2} \cdots {{\pd p}_n}^{r_n} M}$. Then by Corollary \ref{cor2.1}, ${\pd p}_1 \in \supp{{{\pd p}_1}^{r_1 - 1}{{\pd p}_2}^{r_2} \cdots {{\pd p}_n}^{r_n} M}$.
\end{proof}
From Theorems \ref{mainresult2} and \ref{converse}, we get the following corollary.
\begin{corollary}
Let ${\pd p}_1 , \dots , {\pd p}_n$ be distinct prime ideals in $R$ with ${\pd p}_i \not \subset {\pd p}_j$ for $i \neq j$, and $r_1, \dots , r_n$ be positive integers. Then $\ppp n$ is the generalized prime ideal factorization of some submodule of $M$ if and only if ${\pd p}_i \in \supp{\ppp {i-1} {{\pd p}_i}^{r_i - 1} \ppr {i+1} nM}$ for every $1 \leq i \leq n$.
\end{corollary}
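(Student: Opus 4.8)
The plan is to obtain the corollary by combining Theorems~\ref{mainresult2} and~\ref{converse}, together with one elementary localization observation reconciling the slightly different support conditions appearing in those two results. Abbreviate by $(C_i)$ the condition ${\pd p}_i \in \supp{{{\pd p}_i}^{r_i - 1} \ppr{i+1}{n} M}$ that occurs as the hypothesis of Theorem~\ref{mainresult2}, and by $(C_i')$ the condition ${\pd p}_i \in \supp{\ppp{i-1} {{\pd p}_i}^{r_i - 1} \ppr{i+1}{n} M}$ that occurs in Theorem~\ref{converse} and in the statement of the corollary. The first step is to verify that $(C_i) \Leftrightarrow (C_i')$ for each $i$ under the standing hypothesis ${\pd p}_i \not\subset {\pd p}_j$ for $i \neq j$: after localizing at ${\pd p}_i$, every factor ${{\pd p}_j}^{r_j}$ with $j < i$ becomes the unit ideal of $R_{{\pd p}_i}$, since ${\pd p}_j \not\subset {\pd p}_i$ supplies an element of ${\pd p}_j$ that is a unit in $R_{{\pd p}_i}$. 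Hence $\bigl(\ppp{i-1} {{\pd p}_i}^{r_i-1} \ppr{i+1}{n} M\bigr)_{{\pd p}_i} = \bigl({{\pd p}_i}^{r_i-1} \ppr{i+1}{n} M\bigr)_{{\pd p}_i}$, so the two modules vanish or not simultaneously at ${\pd p}_i$, which is exactly the claimed equivalence.

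For the sufficiency direction, I would assume $(C_i')$ holds for every $i$; by the equivalence just noted $(C_i)$ holds for every $i$, and since ${\pd p}_i \not\subset {\pd p}_j$ for $i \neq j$ in particular yields the ordering ${\pd p}_i \not\subset {\pd p}_j$ for $i < j$ demanded by Theorem~\ref{mainresult2}, that theorem produces a submodule $N$ of $M$ with ${\mathcal{P}}_M(N) = \ppp n$.

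For the necessity direction, I would start from a submodule $N$ of $M$ with ${\mathcal{P}}_M(N) = \ppp n$. As recalled in the introduction, the prime ideals occurring in an RPE filtration of $M$ over $N$ constitute exactly $\ass{M/N}$, so ${\mathcal{P}}_M(N) = \ppp n$ forces $\ass{M/N} = \{{\pd p}_1,\dots,{\pd p}_n\}$; since no ${\pd p}_i$ is contained in any other, every prime in $\ass{M/N}$ is minimal there. Thus the minimality hypothesis of Theorem~\ref{converse} is automatically met, and that theorem supplies $(C_i')$ for every $i$, which is precisely the asserted condition.

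The whole argument is essentially bookkeeping once the two theorems are in hand. The only two points requiring a moment's care are the localization identity that makes the shorter condition $(C_i)$ and the full condition $(C_i')$ agree under the pairwise incomparability hypothesis, and the observation that the extra minimality hypothesis of Theorem~\ref{converse} comes for free here from that same incomparability; I do not anticipate any genuine obstacle beyond these.
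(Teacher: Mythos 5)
Your proposal is correct and follows the same route the paper intends, namely deducing the corollary directly from Theorems \ref{mainresult2} and \ref{converse}; your localization remark (or simply the inclusion ${\pd p}_1^{r_1}\cdots{\pd p}_{i-1}^{r_{i-1}}{\pd p}_i^{r_i-1}{\pd p}_{i+1}^{r_{i+1}}\cdots{\pd p}_n^{r_n}M \subseteq {\pd p}_i^{r_i-1}{\pd p}_{i+1}^{r_{i+1}}\cdots{\pd p}_n^{r_n}M$, which already gives the one direction needed) correctly bridges the differing support conditions, and your observation that pairwise incomparability makes every prime of $\ass{M/N}$ minimal supplies the hypothesis of Theorem \ref{converse}. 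These are exactly the details the paper leaves implicit, so no further comment is needed.
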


In \cite{E} we have found conditions for ${\mathcal{P}}_M(\pp nM) = \pp n$ \cite[Theorem~2.14]{E} and showed that this need not always be true \cite[Example~2.5]{E}. Now for an $R$-module $M$ and a product of prime ideals ${\pd a} = \pp n$ (${\pd p}_i$'s not necessarily distinct), we give a necessary and sufficient condition for ${\mathcal{P}}_M({\pd a}M) = {\pd a}$.
\begin{theorem}\label{iff-thm}
Let $M$ be an $R$-module and ${\pd p}_1, \dots, {\pd p}_n$ be prime ideals in $R$, not necessarily distinct, with ${\pd p}_i$ maximal among $\{{\pd p}_i, \dots, {\pd p}_n \}$ for $1 \leq i \leq n$. Let ${\pd a} = \pp n$, ${\pd a}_0 = R$, and ${\pd a}_i = \pp i$ \,for $i = 1, \dots , n-1$. Then ${\mathcal{P}}_M({\pd a}M) = {\pd a}$ if and only if $\mathrm{Ass}\big(\frac{({\pd a} M : {\pd a}_{i})}{({\pd a} M : {\pd a}_{i-1})} \big) = \{{\pd p}_i \}$ for every $1 \leq i \leq n$.
\end{theorem}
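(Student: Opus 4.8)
\emph{The plan.} The strategy is to single out a candidate chain $M_0\subset\cdots\subset M_n$, show it is forced to be the RPE filtration whenever ${\mathcal P}_M({\pd a}M)={\pd a}$, and then read the equivalence off Lemma~\ref{lemma1} and the preamble facts. Set ${\pd a}_n:={\pd a}$ (so ${\pd a}_0=R$ and ${\pd a}_i=\pp i$ for $0\le i\le n$), and for $0\le i\le n$ put $M_i:=({\pd a}M:{\pd a}_i)=\{x\in M\mid {\pd a}_ix\subseteq {\pd a}M\}$. Then $M_0={\pd a}M$, $M_n=M$, $M_{i-1}\subseteq M_i$, and since ${\pd a}_i={\pd a}_{i-1}{\pd p}_i$ we have $M_i=(M_{i-1}:{\pd p}_i)$ for $1\le i\le n$. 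I will prove: (I) ${\mathcal P}_M({\pd a}M)={\pd a}$ if and only if $M_0\subset M_1\subset\cdots\subset M_n$ is an RPE filtration of $M$ over ${\pd a}M$; and (II) this chain is an RPE filtration if and only if $\ass{M_i/M_{i-1}}=\{{\pd p}_i\}$ for every $1\le i\le n$. The theorem is the combination of (I) and (II).

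\emph{Step (I).} If the chain is an RPE filtration, the prime ideals occurring in it are ${\pd p}_1,\dots,{\pd p}_n$, so ${\mathcal P}_M({\pd a}M)=\pp n={\pd a}$. Conversely, suppose ${\mathcal P}_M({\pd a}M)={\pd a}$; then in any RPE filtration of $M$ over ${\pd a}M$ the prime ideals occur with the same multiplicities as in ${\pd p}_1\cdots{\pd p}_n$. Starting from such a filtration, I repeatedly apply Lemma~\ref{interchange}, exactly in the manner of the proof of Lemma~\ref{minimal}: since ${\pd p}_1$ is maximal among $\{{\pd p}_1,\dots,{\pd p}_n\}$, every adjacent swap needed to bubble a copy of ${\pd p}_1$ to the first position is legitimate; since ${\pd p}_2$ is maximal among $\{{\pd p}_2,\dots,{\pd p}_n\}$, a copy of it can then be bubbled to the second position; and so on. This produces an RPE filtration ${\pd a}M=M_0'\overset{{\pd p}_1}\subset M_1'\overset{{\pd p}_2}\subset\cdots\overset{{\pd p}_n}\subset M_n'=M$ with the prime ideals in the prescribed order. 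By Lemma~\ref{lemma1}, $M_i'=\{x\in M\mid \pp i x\subseteq {\pd a}M\}=M_i$, so the chain $(M_i)$ is exactly this RPE filtration.

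\emph{Step (II).} If $(M_i)$ is an RPE filtration then each $M_{i-1}\overset{{\pd p}_i}\subset M_i$ is a ${\pd p}_i$-prime extension, so $\ass{M_i/M_{i-1}}=\{{\pd p}_i\}$ by the preamble. For the converse, assume $\ass{M_i/M_{i-1}}=\{{\pd p}_i\}$ for all $i$. Each quotient is nonzero, so the chain is strict. The standard filtration estimate gives $\ass{M/M_{i-1}}\subseteq\bigcup_{j\ge i}\ass{M_j/M_{j-1}}=\{{\pd p}_i,\dots,{\pd p}_n\}$, while ${\pd p}_i\in\ass{M_i/M_{i-1}}\subseteq\ass{M/M_{i-1}}$; since ${\pd p}_i$ is maximal among $\{{\pd p}_i,\dots,{\pd p}_n\}$, it is a maximal element of $\ass{M/M_{i-1}}$. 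Combining this with the identity $M_i=(M_{i-1}:{\pd p}_i)$ and the auxiliary fact below shows that $M_{i-1}\overset{{\pd p}_i}\subset M_i$ is a regular ${\pd p}_i$-prime extension in $M$ for every $i$, so $(M_i)$ is an RPE filtration of $M$ over ${\pd a}M$.

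\emph{Auxiliary fact and main obstacle.} \textbf{Claim:} if ${\pd p}$ is a maximal element of $\ass{M/N}$ (in particular ${\pd p}\in\ass{M/N}$), then $K:=(N:{\pd p})=\{x\in M\mid {\pd p}x\subseteq N\}$ is the unique maximal ${\pd p}$-prime extension of $N$ in $M$, and it is regular. Indeed, $N\subsetneq K$ because ${\pd p}\in\ass{M/N}$, and ${\pd p}K\subseteq N$ gives ${\pd p}\subseteq(N:K)$; every ${\pd q}\in\ass{K/N}\subseteq\ass{M/N}$ contains $(N:K)\supseteq{\pd p}$, so ${\pd q}={\pd p}$ by maximality, whence $\ass{K/N}=\{{\pd p}\}$. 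Then the zero-divisors on the nonzero module $K/N$ are exactly ${\pd p}$, which forces $(N:K)\subseteq{\pd p}$ — hence $(N:K)={\pd p}$ — and forces $N$ to be a ${\pd p}$-prime submodule of $K$; and any ${\pd p}$-prime extension $K'$ of $N$ in $M$ satisfies ${\pd p}K'\subseteq N$, i.e.\ $K'\subseteq K$, so $K$ is the maximal one, regular because ${\pd p}$ is maximal in $\ass{M/N}$. I expect the two delicate points to be: (a) checking that the non-containment hypothesis of Lemma~\ref{interchange} actually holds at each swap in Step~(I) — this is precisely where the ``maximal among the tail'' ordering of the ${\pd p}_i$ enters, and the reason an arbitrary RPE filtration must be sorted first; and (b) in the Claim, the passage from $\ass{K/N}=\{{\pd p}\}$ to the \emph{exact} equality $(N:K)={\pd p}$, rather than $(N:K)$ being merely ${\pd p}$-primary, since it is this exactness that makes $K$ a genuine ${\pd p}$-prime extension and not just a module with $\ass{K/N}=\{{\pd p}\}$.
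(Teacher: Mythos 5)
Your proof is correct, and its overall skeleton is the same as the paper's: both arguments work with the same candidate chain $M_i=({\pd a}M:{\pd a}_i)$, both get the ``only if'' direction by producing an RPE filtration of $M$ over ${\pd a}M$ with the primes in the prescribed order (you do this by bubbling with Lemma~\ref{interchange}, which is in fact more explicit than the paper's one-line ``we can construct''), and both then identify its terms via Lemma~\ref{lemma1}. The genuine difference is in the ``if'' direction. The paper uses the hypothesis $\ass{M_i/M_{i-1}}=\{{\pd p}_i\}$ to invoke the existence of a regular ${\pd p}_i$-prime extension $K$ of $M_{i-1}$ inside $M_i$, identifies $K=M_i$ via Lemma~\ref{lemma1} and the colon identity ${\pd a}_i={\pd a}_{i-1}{\pd p}_i$, checks maximality in $M$ by the same colon computation, and then gets regularity by citing the MPE fact from [A, Proposition~14] that $\ass{M/M_{i-1}}=\{{\pd p}_i,\dots,{\pd p}_n\}$ together with the ordering hypothesis. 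You instead prove a self-contained auxiliary claim --- if ${\pd p}$ is a maximal element of $\ass{M/N}$ then $(N:_M{\pd p})$ is the unique maximal (hence regular) ${\pd p}$-prime extension of $N$ in $M$ --- and obtain the needed maximality of ${\pd p}_i$ in $\ass{M/M_{i-1}}$ from the elementary bound $\ass{M/M_{i-1}}\subseteq\bigcup_{j\ge i}\ass{M_j/M_{j-1}}$ rather than from the MPE machinery; your handling of the two delicate points you flag (the swap condition ${\pd p}\not\subseteq{\pd q}$ coming from the ``maximal among the tail'' ordering, and the exact equality $(N:K)={\pd p}$ via zero-divisors) is sound. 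What your route buys is independence from [A, Proposition~14] and a reusable characterization of regular extensions as colon submodules; what the paper's route buys is brevity, since it leans on the cited filtration results instead of reproving them.
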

\begin{proof}
If $\mathrm{Ass}\big(\frac{({\pd a} M : {\pd a}_{i})}{({\pd a} M : {\pd a}_{i-1})} \big) = \{{\pd p}_i \}$ for every $1 \leq i \leq n$, we show that
\begin{equation}\label{eqn4}
{\pd a} M  \overset {{\pd p}_1} \subset ({\pd a} M : {\pd a}_{1}) \overset {{\pd p}_2} \subset ({\pd a} M : {\pd a}_{2}) \subset \cdots \subset ({\pd a} M : {\pd a}_{n-1}) \overset {{\pd p}_n} \subset ({\pd a} M : {\pd a})  = M
\end{equation}
is an RPE filtration.

$\mathrm{Ass}\big(\frac{({\pd a} M : {\pd a}_{i})}{({\pd a} M : {\pd a}_{i-1})} \big) = \{{\pd p}_i \}$ implies that there exists a regular ${\pd p}_i$-prime extension $K$ of $({\pd a} M : {\pd a}_{i-1})$ in $({\pd a} M : {\pd a}_{i})$. Then $K = \{x \in ({\pd a} M : {\pd a}_{i})\mid$ ${\pd p}_i x \subseteq ({\pd a} M : {\pd a}_{i-1}) \}$ by Lemma \ref{lemma1}. For every $x \in ({\pd a} M : {\pd a}_{i})$, ${\pd a}_{i-1}{\pd p}_i x = {\pd a}_i x \subseteq {\pd a}M$, that is, ${\pd p}_i x \subseteq ({\pd a} M : {\pd a}_{i-1})$. Therefore, $K = ({\pd a} M : {\pd a}_{i})$, and hence $({\pd a} M : {\pd a}_{i})$ is the unique regular ${\pd p}_i$-prime extension of $({\pd a} M : {\pd a}_{i-1})$ in $({\pd a} M : {\pd a}_{i})$. Suppose it is not maximal in $M$. Then there exists $x \in M \setminus ({\pd a} M : {\pd a}_{i})$ such that ${\pd p}_i x \subseteq ({\pd a} M : {\pd a}_{i-1})$, i.e., $x \in ({\pd a} M : {\pd a}_{i-1}{\pd p}_i) = ({\pd a} M : {\pd a}_{i})$, a contradiction. So $({\pd a} M : {\pd a}_{i})$ is a maximal ${\pd p}_i$-prime extension of $({\pd a} M : {\pd a}_{i-1})$ in $M$ for every $i$. Therefore (\ref{eqn4}) is an MPE filtration of $M$ over ${\pd a} M$. This implies that $\mathrm{Ass}\big(\frac{M}{({\pd a} M : {\pd a}_{i-1})} \big) = \{{\pd p}_i, \dots, {\pd p}_n \}$ for every $1 \leq i \leq n$. Since ${\pd p}_i$ is maximal among $\{{\pd p}_i, \dots, {\pd p}_n \}$, ${\pd p}_i$ is maximal in $\mathrm{Ass}\big(\frac{M}{({\pd a} M : {\pd a}_{i-1})} \big)$. Therefore (\ref{eqn4}) is an RPE filtration. Hence ${\mathcal{P}}_M({\pd a}M) = \pp n = {\pd a}$.

Conversely, suppose that ${\mathcal{P}}_M({\pd a}M) = {\pd a} = \pp n$. Since ${\pd p}_i$ is maximal among $\{{\pd p}_i, \dots, {\pd p}_n \}$ for every $1 \leq i \leq n$, we can construct an RPE filtration $${\pd a} M = M_0 \overset {{\pd p}_1} \subset M_{1} \overset {{\pd p}_2} \subset M_{2} \subset \cdots M_{n-1} \overset {{\pd p}_n} \subset M_n = M$$ of $M$ over ${\pd a} M$. Then by Lemma \ref{lemma1}, $M_{i} = \{ x \in M \mid \pp i x \subseteq {\pd a} M \}$, i.e., $M_{i} = ({\pd a} M : {\pd a}_i)$ for every $1 \leq i \leq n$. Then clearly $\mathrm{Ass}\big(\frac{({\pd a} M : {\pd a}_{i})}{({\pd a} M : {\pd a}_{i-1})} \big) = \mathrm{Ass}\big(\frac{M_{i}}{M_{i-1}} \big) = \{{\pd p}_i \}$ for every $1 \leq i \leq n$.
\end{proof}


%
\vskip 0.4 true cm

\bibliographystyle{amsplain}

\begin{thebibliography}{5}


\bibitem{A} T. Duraivel, S. Mangayarcarassy, and K. Premkumar,
\emph{Prime extension filtration of modules}, Int. J. Pure Appl. Math., 98(2) (2015), 211--220.

\bibitem{B} T. Duraivel, S. Mangayarcarassy, and K. Premkumar, \emph{Prime extension dimension of a module}, J. Algebra Relat. Top., 6(2) (2018), 97--106.

\bibitem{C}
D. Eisenbud, Commutative algebra with a view toward algebraic geometry,
  Springer-Verlag, 1995.

\bibitem{D}
H. Matsumura, Commutative ring theory,
  Cambridge University Press, 1989.
  
\bibitem{E} K. R. Thulasi, T. Duraivel, and S. Mangayarcarassy,
\emph{Generalized prime ideal factorization of submodules}, J. Algebra Relat. Top., 9(2) (2021), 121--129.






\end{thebibliography}

\bigskip
\bigskip

\noindent {\footnotesize {\bf  K. R. Thulasi }\; \\
Department of Mathematics, Pondicherry University, Pondicherry, India.\\
 {\tt thulasi.3008@gmail.com}

\noindent {\footnotesize {\bf  T. Duraivel }\; \\
Department of Mathematics, Pondicherry University, Pondicherry, India.\\
 {\tt tduraivel@gmail.com}
 
\noindent {\footnotesize {\bf  S. Mangayarcarassy }\; \\
Department of Mathematics, Puducherry Technological University, Pondicherry, India.\\
 {\tt dmangay@pec.edu}

\end{document}